\newtheorem{problem}{Problem}
\newtheorem{notation}{Notation}
\newtheorem{remark}{Remark}
\newtheorem{theorem}{Theorem}
\newtheorem{lemma}[theorem]{Lemma}
\newenvironment{proofof}[1]{{\bf Proof #1.}}{\hfill$\square$}
\newcommand{\kbox}[1]{ {#1^{(k)}}\raisebox{.8em}{}}
\newcommand{\kkbox}[2]{ {#1^{(#2)}}\raisebox{.8em}{}}
\newcommand{\kappaess}{\kappa_{\mathrm{ess}}}
\DeclareMathAlphabet{\mathpzc}{OT1}{pzc}{m}{it}
\newcommand{\degree}{\mathpzc{p}}
\newcommand{\curl}{\text{\normalfont curl}\,}
\newcommand{\divergence}{\text{\normalfont div}\,}
\newcommand{\I}{\mathrm I}
\newcommand{\scaling}{\mathcal D}
\journal{arXiv}
\begin{document}
\begin{frontmatter}
\title{Isogeometric Tearing and Interconnecting Solvers for Linearized Elasticity in multi-patch Isogeometric Analysis}

\author[js]{Jarle Sogn}
\address[js]{Department of Mathematics, University of Oslo, Postboks 1053, Blindern, 0316 Oslo, Norway}
\author[st]{Stefan Takacs}
\address[st]{Institute of Numerical Mathematics, Johannes Kepler University Linz, Altenberger~Str.~69, 4040 Linz, Austria}
\begin{abstract}
We consider the linearized elasticity equation, discretized with multi-patch Isogeometric Analysis. A standard discretization error analysis is based on Korn's inequality, which degrades for certain geometries, such as long and thin cantilevers. This phenomenon is known as geometry locking. We observe that high-order methods, like Isogeometric Analysis is beneficial in such a setting.
The main focus of this paper is the construction and analysis of a domain decomposition solver, namely an Isogeometric Tearing and Interconnecting (IETI) solver, where we prove that the convergence behavior does not depend on the constant of Korn's inequality for the overall domain, but only on the corresponding constants for the individual patches. Moreover, our analysis is explicit in the choice of the spline degree. Numerical experiments are provided which demonstrates the efficiency of the proposed solver.
\end{abstract}
\begin{keyword}
Linear elasticity \sep Isogeometric Analysis \sep Convergence analysis \sep Domain decomposition solver
\end{keyword}
\end{frontmatter}

\section{Introduction}

In this paper, we consider a discretization and a solution strategy for the linearized elasticity problem. For small deformations $\mathbf{u}$ of a body in static equilibrium, the equation
\begin{equation}
  \label{eq:primalform}
- 2\mu \; \divergence \epsilon(\mathbf{u}) -\lambda \; \nabla \divergence \mathbf{u}   = \mathbf{f}
\end{equation}
holds in a variational sense, where $\epsilon(\mathbf u) = \frac12 (\nabla \mathbf u + \nabla^\top \mathbf u)$ is the symmetric gradient, $\lambda,\mu>0$ are Lam\'e coefficients modelling the material properties and $\mathbf{f}$ is the external volumetric force acting on the body, cf.~\cite{gould1994introduction}. In this model, the computational domain $\Omega \subset \mathbb R^d$ models the shape of the undeformed object with boundary $\partial \Omega$.  As boundary conditions, we use Dirichlet boundary conditions to prescribe the displacement on some part $\Gamma_D$ of the boundary with positive measure and Neumann boundary conditions on the  remainder $\Gamma_N := \partial\Omega\setminus\Gamma_D$ of the boundary to prescribe acting forces:
\begin{align}
  \mathbf u &= \mathbf g_D & \mbox{on}\quad \Gamma_D, \label{eq:dirichlet} \\
  2\mu\; \epsilon(\mathbf u)\; \mathbf n+\lambda \; \divergence \mathbf u \; \mathbf n &= \mathbf g_N &\mbox{on}\quad \Gamma_N, \label{eq:neumann}
\end{align}
where $\mathbf g_D$ denotes the prescribed displacement and $\mathbf g_N$ the prescribed boundary force density.

Several challenges may arise when discretizing and solving Equation~\eqref{eq:primalform}. The ratio $\mu/\lambda$ takes small values for nearly incomressible materials. The limit case $\mu/\lambda\to0$ can be understood as the case of completely incompressible materials. In these cases, the well-posedness (in the sense of the Lax-Milgram lemma) deteriorates, which might lead to a poor approximation of the solution with standard Galerkin discretizations. This phenomenon is known as material locking and geometry locking. A common remedy is to introduce an auxiliary variable and derive a mixed variational formulation, cf.~\cite{boffi2013mixed, stenberg1986construction, arnold2007mixed, auricchio2007fully}. The focus of this paper is not on material locking; thus, we assume $\mu/\lambda$ is well bounded away from zero.

Another challenge is related to the symmetric gradient. For showing well-posedness in the standard Sobolev space $H^1$, one typically uses Korn's inequality, which introduces a constant that usually depends on the geometry of the object in question. This happens for example for long and thin cantilevers. Also such domains might lead to a poor approximation of the solution with standard Galerkin discretizations, which is known as geometry locking. We will discuss this issue in detail and observe that high-order methods help to mitigate related issues, compared to standard discretizations with piecewise linear functions.

We use Isogeometric Analysis (IgA) as discretization method. IgA was introduced in the seminal paper~\cite{hughes2005isogeometric} as a new technique for discretizing partial differential equations (PDEs) that improves the integration of simulation and computer aided design (CAD). In IgA, both the computational domain and the solution of the PDE are represented as linear combination of tensor-product B-splines or non-uniform rational B-splines (NURBS). Only simple domains can be parameterized using a single geometry mapping. More complicated domains are usually composed of several patches, where each patch is parameterized independently (multi-patch domains). For more information on IgA, see the review paper \cite{da2014mathematical} and the references therein.

The main focus of this paper is the intoduction and the analysis of a fast domain decomposition solver for the linearized elasticity problem, discretized using IgA on multi-patch domains. We consider FETI-DP methods, which were originally introduced in~\cite{farhat2001feti} and has been used to solve linear elasticity, see, e.g.,~\cite{KLAWONN20071400,farhat2000scalable,KlawonnWidlund:2006}. For multi-patch IgA domains, the patches themselves serve as a natural choice for the substructures. FETI-DP was first adapted to IgA in~\cite{kleiss2012ieti} and named the \textit{Dual-Primal Isogeometric Tearing and Interconnecting} (IETI-DP) method. In recent years, there has been a lot of research in IETI-DP methods, see, e.g.,~\cite{hofer2017dual, SchneckenleitnerTakacs:2020, SognTakacs:2022, widlund2021block, schneckenleitner2022ieti, montardini2022ieti}.

Isogeometric domain decomposition methods for mixed linear elasticity systems have been studied in \cite{widlund2021block,pavarino2016isogeometric}, where convergence estimates are provided. We study the primal formulation and use the results from \cite{MandelDohrmannTezaur:2005a, SchneckenleitnerTakacs:2020} to prove a convergence estimate which is explicit with respect to both the spline degree and grid-size (Theorem~\ref{thrm:fin}). The analysis shows that the performance of the solver does not depend on the shape of the overall computational domain, but only on the shapes of the individual patches. This means that one can obtain a robust IETI solver by setting up an adequate subdivision of the computational domain into patches.

This paper is organized as follows. In Section~\ref{sec:modelProblem}, we introduce the variational formulation of the model problem and its discretization. The well-posedness and the discretization errors are discussed in the subsequent Section~\ref{sec:approx}. The proposed solver is introduced in Section~\ref{sec:IETISolver}. The following Section~\ref{sec:ieti:theory} is dedicated to its analysis. We close with numerical results and concluding remarks in Section~\ref{sec:num}.

\section{Problem formulation and discretization}
\label{sec:modelProblem}

In this section we formulate the variational formulation of the elasticity problem~\eqref{eq:primalform}~--~\eqref{eq:neumann} and its discretization.
Let $\Omega\subset\mathbb{R}^2$ be an open, connected and bounded domain with Lipschitz boundary $\partial \Omega$. $L^2(\Omega)$ and $H^s(\Omega)$ denote the standard Lebesgue and Sobolev spaces with standard norms $\|\cdot\|_{L^2(\Omega)}$, $\|\cdot\|_{H^s(\Omega)}$, seminorms $|\cdot|_{H^s(\Omega)}$, and scalar products $(\cdot,\cdot)_{L^2(\Omega)}$. Analogously, we define these spaces, norms, seminorms and scalar products on other domains and on manifolds in $\mathbb R^2$, like boundaries. For ease of notation, we assume that Dirichlet boundary conditions are homogeneous. So, the displacement has to be in $H^1_{0,D}(\Omega) := \left\{ v\in H^1(\Omega) : v|_{\Gamma_D} = 0 \right\}$. Here and in what follows a restriction to (a part of) the boundary is to be understood in the sense of a trace operator. By standard arguments, the following problem is the weak form of the elasticity equations.

\begin{problem}\label{prob1}
	Given $\mathbf f\in [L^2(\Omega)]^2$ and $\mathbf g_N\in [L^2(\Gamma_N)]^2$,
	find $\mathbf u\in [H^1_{0,D}(\Omega)]^2$ such that
	\begin{equation}
	  \label{eq:ellipt:elasticity}
	 a(\mathbf u,\mathbf v)
	= \langle F, \mathbf{v}\rangle \quad \mbox{for all}\quad \mathbf v \in [H^1_{0,D}(\Omega)]^2,
	\end{equation}
	where
	\begin{align*}
		a(\mathbf u,\mathbf v)
		&:=
		2\mu (\epsilon(\mathbf{u}), \epsilon(\mathbf{v}))_{L^2(\Omega)}
	+\lambda\; ( \divergence \mathbf{u},\divergence \mathbf{v})_{L^2(\Omega)},
	\mbox{ and }
	\\
	\langle F, \mathbf{v} \rangle
	&:=
	( \mathbf{f},\mathbf{v} )_{L^2(\Omega)}	+  (\mathbf{g}_N, \mathbf{v} )_{L^2(\Gamma_N)}.
	\end{align*}
\end{problem}

We discretize the elasticity equations by means of multi-patch Isogeometric Analysis.
We assume that the computational domain $\Omega$ is composed
of $K$ non-overlapping patches $\Omega^{(k)}$, i.e., the domains
$\Omega^{(k)}$ are open, connected and bounded with Lipschitz boundary such that
\begin{align*}
	\overline{\Omega} = \bigcup_{k=1}^K \overline{\Omega^{(k)}} \quad \text{and}\quad
	\Omega^{(k)} \cap \Omega^{(\ell)} = \emptyset \quad\text{for all}\quad k \neq \ell,
\end{align*}
where $\overline{T}$ denotes the closure of the set $T$.
We require that the patches form an admissible decomposition, i.e.,
that there are no T-junctions. This means that for any two patch indices $k\not=\ell$, the set	$\partial{\Omega^{(k)}} \cap \partial{\Omega^{(\ell)}}$	is either a common edge
$\Gamma^{(k,\ell)}:=\partial{\Omega^{(k)}} \cap \partial{\Omega^{(\ell)}}$,
a common vertex, or empty (cf.~\cite[Ass.~2]{SchneckenleitnerTakacs:2020}).
Moreover, we assume that both the Neumann boundary and the Dirichlet boundary consist of whole edges.

For each patch index $k$, the set $\mathcal N_{\Gamma}(k)$ contains the indices $\ell$ of
patches $\Omega^{(\ell)}$ that share an edge with $\Omega^{(k)}$.
The common vertices of two or more
patches -- that are not located on the Dirichlet boundary -- are denoted by
$\mathbf x_1,\ldots,\mathbf x_J$. For each $j=1,\ldots,J$, the set $\mathcal{N}_{\mathbf x}(j)$ contains the
indices of all patches $\Omega^{(k)}$ such that $x_j\in\partial\Omega^{(k)}$.
We assume that there is a constant $C_1$ such that
\begin{equation}\label{eq:ass:neighbor}
		|\mathcal{N}_{\mathbf x}(j)|\le C_1
				\quad \mbox{for all}\quad j=1,\ldots,J
\end{equation}
(cf. \cite[Ass.~3]{SchneckenleitnerTakacs:2020}).
The set of all vertices associated to one patch is denoted by
\[
	\mathcal{N}_{\mathbf x}^{-1}(k) := \{ j : k\in \mathcal{N}_{\mathbf x}(j) \}.
\]
Each patch $\Omega^{(k)}$ is parameterized by a geometry mapping
\begin{align}
	\mathbf{G}_k:\widehat{\Omega}:=(0,1)^2 \rightarrow \Omega^{(k)}:=\mathbf{G}_k(\widehat{\Omega}) \subset \mathbb{R}^2,
\end{align}
which can be continuously extended to the closure of the parameter domain
$\widehat{\Omega}$. In IgA, the geometry mapping is typically represented using B-splines or NURBS. As usual, the computational methods do not depend on such a representation. We only assume that the geometry mappings are not too much distorted, i.e., there is a constant $C_2>0$ such that
\begin{equation}\label{eq:ass:nabla}
		\| \nabla \mathbf G_k \|_{L^\infty(\widehat{\Omega})} \le C_2\, H_k
		\quad\text{and}\quad
		\| (\nabla \mathbf G_k)^{-1} \|_{L^\infty(\widehat{\Omega})} \le C_2\, H_k^{-1}
\end{equation}
holds for all $k=1,\ldots,K$, where $H_k$ is the diameter of the patch $\Omega^{(k)}$
(cf.~\cite[Ass.~1]{SchneckenleitnerTakacs:2020}).

On the parameter domain $\widehat \Omega=(0,1)^2$, we choose a B-spline space of some freely chosen spline degree $\degree\in\mathbb N:=\{1,2,3,\ldots\}$. The spline space depends on a freely
chosen $\degree$-open knot vector
$\Xi^{(k,\delta)}:=(\xi^{(k,\delta)}_0,\cdots,\xi^{(k,\delta)}_{N^{(k,\delta)}})$ for each patch $k$ and each spacial direction $\delta\in\{1,2\}$. We allow repeated (inner) knots in order to reduce smoothness; since we require a $H^1$-conforming discretization, no more than $\degree$ interior knots may be repeated. Based on these knot vectors, the corresponding B-splines are obtained by the Cox-de Boor formula (cf.~\cite[(2.1) and (2.2)]{Cottrell:Hughes:Bazilevs}); these B-splines span the spline space $S^{(k,\delta,\degree)}\subset H^1(0,1)$. Based on these univariate splines, we introduce the corresponding tensor product spline space as
$S^{(k,\degree)} := S^{(k,1,\degree)} \otimes S^{(k,2,\degree)}$
as discretization space on the parameter domain $\widehat\Omega$, equipped with the standard tensor product basis.

The function spaces on the physical patches $\Omega^{(k)}$ are defined via
the \emph{pull-back principle}:
\[
		\mathbf V^{(k)} := \left\{ v : v\circ \mathbf G_k \in S^{(k,\degree)},\; v\big|_{\Gamma_D\cap\partial\Omega^{(k)}} = 0\right\}^2.
\]
The grid size $\widehat h_k$ on the parameter domain and the grid size $h_k$ on the physical patch are defined by
\begin{equation}\label{eq:gridsize}
		\widehat h_k:=\max\left\{ \xi_i^{(k,\delta)}-\xi_{i-1}^{(k,\delta)}
								\,:\, i=1,\ldots,N^{(k,\delta)}  ,\, \delta=1,2 \right\}
								\quad\mbox{and}\quad
								h_k:=H_k \widehat h_k,
\end{equation}
where the definition of the latter is motivated by~\eqref{eq:ass:nabla}. Analogously, we
define the minimum grid size
\[
		\widehat h_{\min,k}:=\min\left\{ \xi_i^{(k,\delta)}-\xi_{i-1}^{(k,\delta)}\ne0
								\,:\, i=1,\ldots,N^{(k,\delta)}  ,\, \delta=1,2 \right\}
\]
and $h_{\min,k}:=H_{k} \widehat h_{\min,k}$.
We assume the grid to be quasi-uniformity, i.e., there is a constant $C_3$ such that
\begin{equation}\label{eq:ass:quasiuniform}
		 \frac{\widehat h_{k}}{\widehat h_{\min,k}}=\frac{h_{k}}{h_{\min,k}}\le C_3 \text{ for } k=1,\ldots,K,
\end{equation}
cf.~\cite[Ass.~4]{SchneckenleitnerTakacs:2020}.

Finally, we introduce a global discretization space. We assume that the discretization is fully matching (see \cite[Ass.~5]{SchneckenleitnerTakacs:2020}), this means that for any two patches $\Omega^{(k)}$ and $\Omega^{(\ell)}$, sharing an edge $\Gamma^{(k,\ell)}$, the traces agree, $\mathbf V^{(k)}\big|_{\Gamma^{(k,\ell)}}=\mathbf V^{(\ell)}\big|_{\Gamma^{(k,\ell)}}$ and are represented using the same basis. So, as global discretization space, we use
\[
		\mathbf{V} = \left\lbrace \mathbf{v}\in [H^1_{0,D} (\Omega)]^2
		\,:\,
		\mathbf{v}|_{\Omega^{(k)}}\in \mathbf{V}^{(k)} \text{ for } k=1,
		\ldots,K\right\rbrace.
\]
For non-matching interfaces or for patches with T-junctions, discontinuous Galerkin methods could be considered, see, e.g.,~\cite{schneckenleitner2022ieti}. This, however goes beyond this paper.

The discretized problem reads as follows.
\begin{problem}\label{prob2}
	Given $\mathbf f\in [L^2(\Omega)]^2$ and $\mathbf g_N\in [L^2(\Gamma_N)]^2$,
	find $\mathbf u_h\in \mathbf V$ such that
	\begin{equation}
		\label{eq:discelast}
		a(\mathbf u_h,\mathbf v_h)
		= \langle F, \mathbf{v}_h\rangle \quad \mbox{for all}\quad \mathbf v_h \in \mathbf V,
	\end{equation}
	where $a(\cdot,\cdot)$ and $\langle F,\cdot\rangle$ are as in Problem~\ref{prob1}.
\end{problem}

\begin{notation}\label{notation}
		Here and in what follows, $c$ denotes a generic positive constant that only
		depends on the constants $C_1$, $C_2$ and $C_3$ from \eqref{eq:ass:neighbor}, \eqref{eq:ass:nabla} and \eqref{eq:ass:quasiuniform}. We write $a \lesssim b$ if there is such a generic constant such that $a\le c\, b$. We write $a\eqsim b$ if $a\lesssim b\lesssim a$.
\end{notation}

\section{Well posedness and discretization error}\label{sec:approx}

To show well-posedness both of Problem~\ref{prob1} and the discretized Problem~\ref{prob2},  we use the Lax-Milgram lemma~\cite[Theorem~2.7.7]{BrennerScott}, which is applicable since both $[H^1_{0,D}(\Omega)]^2$ and $\mathbf{V}$ are Hilbert spaces.
Thus, existence and uniqueness of solutions to the Problems~\ref{prob1} and~\ref{prob2} and their continuous dependence on the data is guaranteed if $a(\cdot,\cdot)$ is coercive and bounded and if $\langle F,\cdot \rangle$ is bounded; the latter follows directly from the Cauchy-Schwarz inequality and the trace theorem. For the further analysis, the estimates on the bilinear form are more relevant; also here, the boundedness is straight forward:
\begin{equation}\label{eq:a:bounded}
	a(\textbf u,\textbf v) \le (2\mu+2\lambda) \|\epsilon(\mathbf{u})\|_{L^2(\Omega)}\|\epsilon(\mathbf{v})\|_{L^2(\Omega)}
	\le (2\mu+2\lambda) |\mathbf{u}|_{H^1(\Omega)}|\mathbf{v}|_{H^1(\Omega)}
	.
\end{equation}
The crucial point is the coercivity. Assuming
\begin{equation}
		\label{eq:korn}
		\alpha_{\Omega,\Gamma_D} := \inf_{\mathbf u\in H^1_{0,D}(\Omega)}
		\frac{\|\epsilon(\mathbf u)\|_{L^2(\Omega)}}
		{|\mathbf u|_{H^1(\Omega)}} > 0,
\end{equation}
we obtain
\begin{equation*}
		a(\textbf u,\textbf u)
		\ge
		2\mu\|\epsilon( \mathbf{u})\|_{L^2(\Omega)}^2
		\ge
		2\mu \alpha_{\Omega,\Gamma_D}^2 |\mathbf{u}|_{H^1(\Omega)}^2.
\end{equation*}
Using a standard Poincaré--Friedrichs inequality, cf. \cite[Proposition~5.3.4]{BrennerScott}, we know that
\begin{equation*}
		C_F\|\mathbf{u}\|_{H^1(\Omega)}
		\le
		|\mathbf{u}|_{H^1(\Omega)}
		\le
		\|\mathbf{u}\|_{H^1(\Omega)}
		\quad \mbox{for all}\quad \mathbf u \in \mathbf V,
\end{equation*}
where the constant $C_F>0$ depends on $\Omega$ and $\Gamma_D$. Since the Poincaré--Friedrichs inequality guarantees that $|\cdot|_{H^1(\Omega)}$ is actually a norm on $H^1_{0,D}(\Omega)$, we state the following estimates using the \emph{norm} $|\cdot|_{H^1(\Omega)}$.

An estimate of the kind of~\eqref{eq:korn} is obtained using Korn's second inequality. The following Lemma~\ref{lem:korngen} is a handy statement that allows to the construction of various kinds of Korn's inequalities, which we will use throughout this paper.

\begin{remark}
Although the constants of both the Korn and the Poincaré--Friedrichs inequality depend on the size of $\Omega$, the dependence of the Poincaré--Friedrichs inequality is less severe since the $L^2$-norm and the $H^1$-seminorm scale differently with the diameter of $\Omega$. This is different for Korn's inequality: $\|\epsilon(\cdot)\|_{L^2(\Omega)}$ and $|\cdot|_{H^1(\Omega)}$ both are invariant under scaling (or scale like $(\mathrm{diam}\, \Omega)^{d/2-1}$ for $d\ne2$).
\end{remark}

The following Lemma is used to show~\eqref{eq:korn}.

\begin{lemma}\label{lem:korngen}
		Let $\ell(\mathbf u) : [H^1(\Omega)]^2\to \mathbb R$ be a bounded
		linear operator such that $\ell(\mathbf r) \ne 0$ for
		\begin{equation} \label{eq:rigid}
				\mathbf r(x,y) := \begin{pmatrix}
					-y \\ x
				\end{pmatrix}.
		\end{equation}
		Then, there is a constant $\alpha_{\Omega,\ell}>0$ that only depends on
		$\Omega$ and $\ell$ such that
		\begin{equation}
				\|\epsilon(\mathbf v) \|_{L^2(\Omega)} + |\ell(\mathbf v)|
				\ge \alpha_{\Omega,\ell} |\mathbf v|_{H^1(\Omega)}
				\quad \mbox{for all}\quad
				\mathbf v \in [H^1(\Omega)]^2.
		\end{equation}
\end{lemma}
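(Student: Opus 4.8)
The plan is a compactness argument by contradiction, with Korn's second inequality as the key analytic ingredient. I would take as given the second Korn inequality on $\Omega$: there is a constant $C_K>0$ depending only on $\Omega$ such that $\|\mathbf v\|_{H^1(\Omega)}^2 \le C_K\big(\|\mathbf v\|_{L^2(\Omega)}^2 + \|\epsilon(\mathbf v)\|_{L^2(\Omega)}^2\big)$ for all $\mathbf v\in[H^1(\Omega)]^2$. Writing $p(\mathbf v):=\|\epsilon(\mathbf v)\|_{L^2(\Omega)}+|\ell(\mathbf v)|$, the goal is to show $p(\mathbf v)\ge\alpha_{\Omega,\ell}\,|\mathbf v|_{H^1(\Omega)}$.

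Suppose this fails. Then for every $n$ there is $\mathbf v_n$ with $|\mathbf v_n|_{H^1(\Omega)}=1$ and $p(\mathbf v_n)<1/n$, hence $\|\epsilon(\mathbf v_n)\|_{L^2(\Omega)}\to0$ and $\ell(\mathbf v_n)\to0$. Since adding a constant vector changes neither $\epsilon(\cdot)$ nor $|\cdot|_{H^1(\Omega)}$, I would pass to the mean-free representatives, i.e.\ assume $\int_\Omega\mathbf v_n\,dx=0$. A Poincaré inequality for mean-free functions then gives $\|\mathbf v_n\|_{L^2(\Omega)}\lesssim|\mathbf v_n|_{H^1(\Omega)}=1$, so $(\mathbf v_n)$ is bounded in $[H^1(\Omega)]^2$.

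By Rellich--Kondrachov, a subsequence converges strongly in $[L^2(\Omega)]^2$; in particular it is $L^2$-Cauchy. Applying the second Korn inequality to the differences $\mathbf v_n-\mathbf v_m$ and using $\|\epsilon(\mathbf v_n)\|_{L^2(\Omega)}\to0$ upgrades this to a Cauchy sequence in $[H^1(\Omega)]^2$, so the subsequence converges strongly in $H^1$ to some $\mathbf v_\ast$. Then $|\mathbf v_\ast|_{H^1(\Omega)}=\lim|\mathbf v_n|_{H^1(\Omega)}=1$ and $\epsilon(\mathbf v_\ast)=\lim\epsilon(\mathbf v_n)=0$. The characterization of the kernel of the symmetric gradient in two dimensions then gives $\mathbf v_\ast=\mathbf a+b\,\mathbf r$ for some $\mathbf a\in\mathbb R^2$ and $b\in\mathbb R$, and since translations have vanishing seminorm, $|\mathbf v_\ast|_{H^1(\Omega)}=|b|\,|\mathbf r|_{H^1(\Omega)}=1$ forces $b\neq0$. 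Finally, continuity (boundedness) of $\ell$ together with $\ell(\mathbf v_n)\to0$ gives $\ell(\mathbf v_\ast)=0$, which combined with $\ell(\mathbf v_\ast)=\ell(\mathbf a)+b\,\ell(\mathbf r)$ and $b\neq0$ should contradict $\ell(\mathbf r)\neq0$, completing the argument and quantifying $\alpha_{\Omega,\ell}$ implicitly through the compactness constant.

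The step I expect to be the main obstacle is disentangling the kernel of $\epsilon$, i.e.\ the rigid body motions. The seminorm $|\cdot|_{H^1(\Omega)}$ annihilates the two-dimensional space of translations but not the rotation $\mathbf r$, so the only mode that can spoil the inequality is the rotational one --- precisely the mode the hypothesis $\ell(\mathbf r)\neq0$ is meant to detect. The delicate point is that the translational modes are invisible to both $\epsilon(\cdot)$ and the seminorm, so one must ensure the mean-free normalization is compatible with the information $\ell(\mathbf v_n)\to0$; concretely, the clean contradiction $\ell(\mathbf v_\ast)=b\,\ell(\mathbf r)\neq0$ relies on the surviving translational term $\ell(\mathbf a)$ not reintroducing a cancellation, which is where the precise behaviour of $\ell$ on constant fields has to be pinned down.
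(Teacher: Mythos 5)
Your compactness route is genuinely different from the paper's proof. The paper argues directly: it splits $\mathbf v = \rho\,\mathbf r + \mathbf w$ with $\rho$ proportional to $\int_\Omega \curl \mathbf v \,\mathrm d\mathbf x$, applies the Korn inequality of Acosta--Dur\'an for fields with $\int_\Omega \curl \mathbf w\,\mathrm d\mathbf x = 0$ to the remainder $\mathbf w$, and recovers the rotational coefficient through $\ell$ via the triangle inequality; this yields an explicit expression for $\alpha_{\Omega,\ell}$ in terms of the Acosta--Dur\'an constant $\alpha_\Omega$, the ratio $|\mathbf r|_{H^1(\Omega)}/|\ell(\mathbf r)|$ and $\|\ell\|_{[H^1(\Omega)]^*}$. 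Your argument instead trades any formula for the constant against self-containedness: Rellich--Kondrachov plus Korn's second inequality replace the curl-based decomposition, and the constant is obtained only implicitly.

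However, your proof has a genuine gap at exactly the point you flag, and it is worth being precise about why it cannot be closed. Once you replace $\mathbf v_n$ by $\mathbf v_n - \bar{\mathbf v}_n$, the information $\ell(\mathbf v_n)\to 0$ is lost, since $\ell(\mathbf v_n - \bar{\mathbf v}_n) = \ell(\mathbf v_n) - \ell(\bar{\mathbf v}_n)$ and $\ell(\bar{\mathbf v}_n)$ is uncontrolled; and even granting $\ell(\mathbf v_\ast)=0$, the identity $\ell(\mathbf a) + b\,\ell(\mathbf r) = 0$ with $b\neq 0$ contradicts nothing unless $\ell(\mathbf a)=0$. This is not a defect of your strategy: the lemma as stated is \emph{false} whenever $\ell$ does not annihilate constant fields. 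Indeed, pick a constant $\mathbf a_0$ with $\ell(\mathbf a_0)\neq 0$ and set $\mathbf v := \mathbf r - \frac{\ell(\mathbf r)}{\ell(\mathbf a_0)}\,\mathbf a_0$; then $\epsilon(\mathbf v)=0$ and $\ell(\mathbf v)=0$, while $|\mathbf v|_{H^1(\Omega)} = |\mathbf r|_{H^1(\Omega)} = \sqrt{2|\Omega|}>0$, so no $\alpha_{\Omega,\ell}>0$ can exist. The missing hypothesis is that $\ell$ vanishes on constants; it holds for the functional $\ell(w_1,w_2)=\int_0^1 (2y-1)\,w_1(1,y)\,\mathrm dy$ used later in the paper (because $\int_0^1(2y-1)\,\mathrm dy=0$), but not in general for $\ell_{\Gamma_D}(\mathbf v)=(\mathbf v,\mathbf r)_{L^2(\Gamma_D)}$. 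Under this extra hypothesis your argument closes cleanly: the mean-free normalization no longer changes the values of $\ell$, so $\ell(\mathbf v_\ast)=\lim\ell(\mathbf v_n)=0$, while $\mathbf v_\ast = \mathbf a + b\,\mathbf r$ with $b\neq0$ gives $\ell(\mathbf v_\ast)=b\,\ell(\mathbf r)\neq 0$, the desired contradiction. Notably, the paper's own proof harbors the same hidden gap: it estimates $|\ell(\mathbf w)| \le \|\ell\|_{[H^1(\Omega)]^*}\,|\mathbf w|_{H^1(\Omega)}$ with the \emph{seminorm} on the right, which is legitimate (up to a Poincar\'e constant) only when $\ell$ kills constants. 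So the obstacle you identified is precisely the point at which the lemma's hypotheses need to be strengthened, in both proofs.
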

A poof of this Lemma is given at the end of this section.
Since the proof is not constructive, we cannot give a lower bound for
$\alpha_{\Omega, \ell}$ in general. Note that the conditions of the Lemma are satisfied for $\ell_{\Gamma_D}(\mathbf v):= (\mathbf v, \mathbf r)_{L^2(\Gamma_D)}$ with $\mathbf r$ as in~\eqref{eq:rigid}; thus we obtain~\eqref{eq:korn} with a constant $\alpha_{\Omega,\Gamma_D}>0$ that depends on $\Omega$ and $\Gamma_D$.

\begin{remark}
	A simple scaling argument shows that the constant $\alpha_{\Omega,\Gamma_D}$ does not
	depend on the diameter of $\Omega$, but only on its shape.
\end{remark}

Since the assumptions of the Lax-Milgram lamma hold, we know existence and uniqueness of a solution $\mathbf u$ to Problem~\ref{prob1} and $\mathbf u_h$ to Problem~\ref{prob2}. Using Ceá's lemma (in its version \cite[Remark~2.8.5]{BrennerScott} for symmetric bilinear forms), we can bound the discretization error $| \mathbf u - \mathbf u_h |_{H^1(\Omega)}$ by the approximation error as follows:
\[
		| \mathbf u - \mathbf u_h |_{H^1(\Omega)}
		\le \alpha_{\Omega,\Gamma_D}^{-1}\, \sqrt{\frac{\mu + \lambda}{\mu}}
		\inf_{\mathbf v_h \in \mathbf V}
		| \mathbf u - \mathbf v_h |_{H^1(\Omega)}.
\]
This can be further estimated using an approximation error estimate.
\begin{lemma}\label{lem:approx}
Let $q\in \{1,\cdots,\degree\}$ and assume that the geometry functions $\mathbf G_k$ are smooth enough such that there is a constant $\widetilde C_2$ such that
\begin{equation}\label{eq:approxcond}
|\mathbf v \circ \mathbf G_k|_{H^{q+1}(\widehat\Omega)} \le \widetilde C_2 H_k^q
\|\mathbf v\|_{H^{q+1}(\Omega^{(k)})}
\quad\mbox{ for all }\quad \mathbf v\in [H^{q+1}(\Omega)]^2.
\end{equation}
Then, the estimate
\begin{equation}\label{eq:discrerror}
	\inf_{\mathbf v_h \in \mathbf V}| \mathbf v - \mathbf v_h |_{H^1(\Omega)}^2
	\lesssim \widetilde C_2^2 \pi^{-2q} \sum_{k=1}^K h_k^{2q} \| \mathbf v \|_{H^{q+1}(\Omega^{(k)})}^2
	\;\mbox{holds for all}\;
	\mathbf v\in [H^{q+1}(\Omega)]^2.
\end{equation}
\end{lemma}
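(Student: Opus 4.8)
The plan is to construct an explicit admissible competitor $\mathbf{v}_h \in \mathbf{V}$ patch by patch, and to bound $|\mathbf{v} - \mathbf{v}_h|_{H^1(\Omega)}$, since this quantity dominates the infimum. First I would split the seminorm over patches, writing $|\mathbf{v} - \mathbf{v}_h|_{H^1(\Omega)}^2 = \sum_{k=1}^K |\mathbf{v} - \mathbf{v}_h|_{H^1(\Omega^{(k)})}^2$, and treat each patch separately after pulling back to the parameter domain via $\mathbf{G}_k$. Setting $\widehat{\mathbf{v}}_k := \mathbf{v}|_{\Omega^{(k)}} \circ \mathbf{G}_k$, assumption~\eqref{eq:approxcond} gives directly $|\widehat{\mathbf{v}}_k|_{H^{q+1}(\widehat\Omega)} \le \widetilde{C}_2 H_k^q \|\mathbf{v}\|_{H^{q+1}(\Omega^{(k)})}$, so the pulled-back function lives in $[H^{q+1}(\widehat\Omega)]^2$ with a controlled seminorm.

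On the parameter domain I would invoke a tensor-product spline projector $\widehat\Pi_k$ onto $S^{(k,\degree)}$ (applied componentwise), built from univariate projectors and satisfying the degree-explicit approximation estimate $|\widehat{\mathbf{v}}_k - \widehat\Pi_k\widehat{\mathbf{v}}_k|_{H^1(\widehat\Omega)} \lesssim (\widehat{h}_k/\pi)^q\, |\widehat{\mathbf{v}}_k|_{H^{q+1}(\widehat\Omega)}$, valid since $q \le \degree$; this is exactly where the factor $\pi^{-q}$ enters, and where I would cite the sharp univariate spline approximation results. I then push forward, defining $\mathbf{v}_h|_{\Omega^{(k)}} := (\widehat\Pi_k \widehat{\mathbf{v}}_k)\circ\mathbf{G}_k^{-1} \in \mathbf{V}^{(k)}$.

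To transfer the error back, I would use the chain rule together with~\eqref{eq:ass:nabla}: under the pullback the gradient picks up $(\nabla\mathbf{G}_k)^{-\top}$ and the measure picks up $|\det\nabla\mathbf{G}_k|$, so that $\|(\nabla\mathbf{G}_k)^{-1}\|_{L^\infty(\widehat\Omega)}^2 \lesssim H_k^{-2}$ and $\|\det\nabla\mathbf{G}_k\|_{L^\infty(\widehat\Omega)} \lesssim H_k^2$ cancel exactly. This is precisely the scale-invariance of the $H^1$-seminorm in two dimensions noted in the remark, and it yields $|\mathbf{v} - \mathbf{v}_h|_{H^1(\Omega^{(k)})}^2 \lesssim |\widehat{\mathbf{v}}_k - \widehat\Pi_k\widehat{\mathbf{v}}_k|_{H^1(\widehat\Omega)}^2$. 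Chaining the three displays and using $h_k = H_k\widehat{h}_k$ from~\eqref{eq:gridsize} gives the per-patch bound $|\mathbf{v}-\mathbf{v}_h|_{H^1(\Omega^{(k)})}^2 \lesssim \widetilde{C}_2^2\, \pi^{-2q}\, h_k^{2q}\,\|\mathbf{v}\|_{H^{q+1}(\Omega^{(k)})}^2$; summing over $k$ and bounding the infimum by this particular $\mathbf{v}_h$ then finishes the proof.

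The hard part will be establishing that the patchwise-defined $\mathbf{v}_h$ is globally admissible, i.e.\ lies in $\mathbf{V} \subset [H^1_{0,D}(\Omega)]^2$ rather than merely in the broken space $\prod_k \mathbf{V}^{(k)}$. This forces the univariate projectors to be chosen boundary-value preserving, so that the trace of $\widehat\Pi_k\widehat{\mathbf{v}}_k$ on each edge of $\widehat\Omega$ depends only on the trace of $\widehat{\mathbf{v}}_k$ there; combined with the fully matching assumption and the continuity of $\mathbf{v}$ across interfaces, this ensures that neighboring projections agree on each shared edge $\Gamma^{(k,\ell)}$ and that homogeneous traces on $\Gamma_D$ are reproduced. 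Reconciling this boundary-preservation requirement with the sharp $\pi^{-q}$ approximation constant is the delicate point, which I would address either by the standard device of splitting the projection into a trace contribution handled on the edges and an interior correction, or by directly citing a projector known to enjoy both properties simultaneously.
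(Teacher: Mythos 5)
Your proposal is correct and follows essentially the same route as the paper: a patchwise pull-back construction, a degree-robust tensor-product spline projector on the parameter domain (the $\pi^{-q}$ factor coming from sharp univariate estimates, as in the cited work of Sande, Manni and Speleers), scale-invariance of the $H^1$-seminorm via~\eqref{eq:ass:nabla}, and the relation $h_k = H_k \widehat h_k$. The conformity issue you flag as the delicate point is resolved in the paper exactly as you suggest in your last sentence, namely by citing a projector (from Takacs, 2018) that is simultaneously boundary-value preserving -- hence yields $\mathbf v_h\in\mathbf V$ under the fully matching assumption -- and compatible with the sharp approximation constants.
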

This lemma is a slight extension of~\cite[Theorem~3.6]{takacs2018robust}; for completeness, a proof is given at the end of this section. Using this lemma, we immediately obtain the discretization error estimate
\begin{equation}\label{eq:discrerror}
	| \mathbf u - \mathbf u_h |_{H^1(\Omega)}^2
	\lesssim \widetilde C_2^2 \alpha_{\Omega,\Gamma_D}^{-2}\,  \frac{\mu + \lambda}{\mu}
	\pi^{-2q} \sum_{k=1}^K h_k^{2q} \| \mathbf u \|_{H^{q+1}(\Omega^{(k)})}^2.
\end{equation}
We observe that the convergece deteriorates if $\alpha_{\Omega,\Gamma_D} \to 0$ or $\mu/\lambda\to0$. The latter happens in case of almost incompressible materials; incompressible materials correspond to the limit case. This effect is known as material locking. To avoid material locking, one usually considers a saddle point formulation of the elasticity equations, however, this is not in the scope of this paper.

The deterioration of the convergence in case $\alpha_{\Omega,\Gamma_D}$ is too small, is usually known as geometry locking. In the following, we show cases where this can happen and show that the actual convergence might still be good, even if $\alpha_{\Omega,\Gamma_D} \to 0$. These discussions are done for a spacial case. Consider a rectangular domain
$\Omega_K :=(0,K) \times (0,1)$ with $K\in\{1,2,3,\cdots\}$, consisting of square patches $\Omega^{(k)} :=(k-1,k)\times(0,1)$ for $k=1,\ldots,K$, i.e.,
\[
		\overline {\Omega_K} = \overline{(0,K) \times (0,1)}
		= \bigcup_{k=1}^K \overline{\Omega^{(k)}}
		= \bigcup_{k=1}^K \overline{(k-1,k)\times(0,1)}
		,
\]
see Figure~\ref{fig:rect}. The patches are parameterized in the canonical way by $\mathbf{G}_k(\widehat x,\widehat y) := (\widehat x+k-1,\widehat y)^\top$. We consider the linearized elasticity equation with homogeneous Dirichlet boundary conditions on $\Gamma_D:=\Gamma^{(0)}$ and Neumann boundary conditions on $\Gamma_N:=\partial\Omega\setminus \Gamma_D$,
the remainder of the boundary.

\begin{figure}[h]
\begin{center}
	\scalebox{.7}{\begin{tikzpicture}
		 \draw[black!50,->,dashed] (-.2,-1) -- (11, -1) node[below] {\large $\color{black} x$};
		 \draw[black!50,->,dashed] (0,-1.2) -- (0, 1.6) node[left] {\large $\color{black}  y$};
		 \draw[line width=.3ex] (-.02,1) -- (10.02,1);
		 \draw[line width=.3ex] (-.02,-1) -- (10.02,-1);
		 \draw[line width=.3ex] (0,-1) -- (0,1);
		 \draw[line width=.3ex] (2,-1) -- (2,1);
		 \draw [line width=.3ex](4,-1) -- (4,1);
		 \draw[line width=.3ex] (6,-1) -- (6,1);
		 \draw[line width=.3ex] (8,-1) -- (8,1);
		 \draw[line width=.3ex] (10,-1) -- (10,1);
		 \draw (1,0) node {\large  $\Omega^{(1)}$};
		 \draw (3,0) node {\large $\Omega^{(2)}$};
		 \draw (5,0) node {\large $\Omega^{(3)}$};
		 \draw (7,0) node {\large $\Omega^{(4)}$};
		 \draw (9,0) node {\large $\Omega^{(5)}$};

		 \draw (-.2,-.5) node {\large $\Gamma_D=\Gamma^{(0)}$};
		 \draw (2.35,-.5) node {\large $\Gamma^{(1)}$};
		 \draw (4.35,-.5) node {\large $\Gamma^{(2)}$};
		 \draw (6.35,-.5) node {\large $\Gamma^{(3)}$};
		 \draw (8.35,-.5) node {\large $\Gamma^{(4)}$};
		 \draw (10.35,-.5) node {\large $\Gamma^{(5)}$};

	\end{tikzpicture}}
\end{center}
\caption{Rectangular domain, consisting of $K=5$ square patches}
\label{fig:rect}
\end{figure}
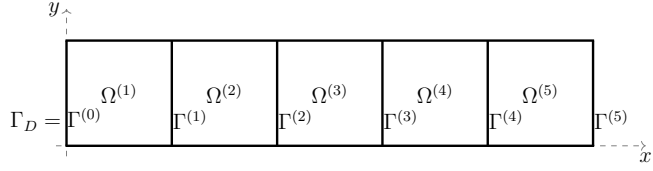

The following Lemma states that the constant
\begin{equation}
		\nonumber
		\alpha_{\Omega_K,\Gamma_D} := \inf_{\mathbf u\in H^1_{0,D}(\Omega_K)}
		\frac{\|\epsilon(\mathbf u)\|_{L^2(\Omega_K)}}
		{|\mathbf u|_{H^1(\Omega_K)}}
\end{equation}
for the corresponding Korn inequality grows linearly with the number of patches.
\begin{lemma}
		For all $K\in \{1,2,3,\ldots\}$, we have $\alpha_{\Omega_K,\Gamma_D} \eqsim K^{-1}$.
\end{lemma}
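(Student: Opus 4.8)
The plan is to prove the two bounds $\alpha_{\Omega_K,\Gamma_D}\lesssim K^{-1}$ and $\alpha_{\Omega_K,\Gamma_D}\gtrsim K^{-1}$ separately; the upper bound follows from a single near-rigid bending mode, while the lower bound needs a patch-by-patch Korn argument together with a telescoping estimate for the accumulated rotations. For the upper bound, I would test the Rayleigh quotient in~\eqref{eq:korn} with an Euler--Bernoulli cantilever deformation $\mathbf u(x,y)=(-y\,w'(x),\,w(x))^\top$, where $w\in C^\infty([0,K])$ satisfies the clamped conditions $w(0)=w'(0)=0$ so that $\mathbf u\in[H^1_{0,D}(\Omega_K)]^2$; the concrete choice $w(x)=x^2$ suffices. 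A direct computation gives $\epsilon(\mathbf u)=\mathrm{diag}(-y\,w'',0)$, so that $\|\epsilon(\mathbf u)\|_{L^2(\Omega_K)}^2=\tfrac13\int_0^K (w'')^2\,dx$, whereas $|\mathbf u|_{H^1(\Omega_K)}^2=\tfrac13\int_0^K (w'')^2\,dx+2\int_0^K (w')^2\,dx$. Since $w'$ grows linearly over the interval of length $K$ while $w''$ stays bounded, the second term scales like $K^3$ against $K$ for the first, yielding a Rayleigh quotient of order $K^{-2}$ and hence $\alpha_{\Omega_K,\Gamma_D}\lesssim K^{-1}$.

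For the lower bound I would exploit that the patches $\Omega^{(k)}=(k-1,k)\times(0,1)$ are all congruent unit squares. On each patch, Korn's second inequality (in the form obtained from Lemma~\ref{lem:korngen}, combined with a Poincaré argument) provides a rigid body motion $\mathbf r_k(x,y)=\mathbf t_k+\omega_k(-y,x)^\top$, namely the projection of $\mathbf u|_{\Omega^{(k)}}$ onto the rigid motions, such that $\|\mathbf u-\mathbf r_k\|_{H^1(\Omega^{(k)})}\lesssim\|\epsilon(\mathbf u)\|_{L^2(\Omega^{(k)})}$, the constant being uniform in $k$ by congruence. Only the rotation contributes to the seminorm, $|\mathbf r_k|_{H^1(\Omega^{(k)})}^2=2\omega_k^2$, so by the triangle inequality $|\mathbf u|_{H^1(\Omega_K)}^2\lesssim\|\epsilon(\mathbf u)\|_{L^2(\Omega_K)}^2+\sum_{k=1}^K\omega_k^2$, and it remains to show $\sum_k\omega_k^2\lesssim K^2\|\epsilon(\mathbf u)\|_{L^2(\Omega_K)}^2$.

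The rotations are controlled by telescoping across the interfaces. Since $\mathbf u$ is continuous across each shared edge $\Gamma^{(k)}$, the jump $\mathbf r_{k+1}-\mathbf r_k=(\mathbf u-\mathbf r_k)-(\mathbf u-\mathbf r_{k+1})$ has a trace norm on $\Gamma^{(k)}$ bounded, via the trace theorem on the two adjacent patches, by $\|\epsilon(\mathbf u)\|_{L^2(\Omega^{(k)})}+\|\epsilon(\mathbf u)\|_{L^2(\Omega^{(k+1)})}$. Restricting the rigid motion $\mathbf r_{k+1}-\mathbf r_k$ to the unit-length vertical segment $\Gamma^{(k)}$ and minimizing over the translation part shows $|\omega_{k+1}-\omega_k|^2\le 12\,\|\mathbf r_{k+1}-\mathbf r_k\|_{L^2(\Gamma^{(k)})}^2$, so neighbouring rotations differ by at most the local strain; the clamped condition $\mathbf u=0$ on $\Gamma_D$ gives $|\omega_1|\lesssim\|\epsilon(\mathbf u)\|_{L^2(\Omega^{(1)})}$ in the same way. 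Telescoping $\omega_k=\omega_1+\sum_{j<k}(\omega_{j+1}-\omega_j)$ and applying Cauchy--Schwarz yields $|\omega_k|\lesssim\sqrt K\,\|\epsilon(\mathbf u)\|_{L^2(\Omega_K)}$, hence $\sum_k\omega_k^2\lesssim K^2\|\epsilon(\mathbf u)\|_{L^2(\Omega_K)}^2$, completing the lower bound.

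The main obstacle is the lower bound, and specifically the clean extraction of the rotation angle from interface traces together with its accumulation across patches: one must check that the uniform per-patch Korn constant, the trace estimate, and the segment-wise lower bound on the rotation combine so that exactly one factor of $\sqrt K$ enters through the telescoping and a second through the final Cauchy--Schwarz over the $K$ patches, producing the sharp $K^2$ and no worse. Keeping track of the fact that translations drop out of the $H^1$-seminorm is what prevents the estimate from degrading further.
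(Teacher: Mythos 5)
Your proof is correct, and it splits cleanly: the upper bound is essentially the paper's own argument, while the lower bound takes a genuinely different route. For the upper bound, both you and the paper test the Rayleigh quotient with an explicit clamped bending mode (your $(-2xy,\,x^2)^\top$ versus the paper's $((2y-1)x,\,-x^2)^\top$); both yield a quotient of order $K^{-2}$ and hence $\alpha_{\Omega_K,\Gamma_D}\lesssim K^{-1}$. For the lower bound, the paper uses an anisotropic scaling argument: given $\mathbf v$ on $\Omega_K$ it defines $\mathbf w(x,y):=(v_1(Kx,y),\,K^{-1}v_2(Kx,y))^\top$ on the unit square, checks that the component-wise scaling leaves the mixed strain term invariant so that the Rayleigh quotient of $\mathbf v$ is bounded by $K^2$ times that of $\mathbf w$, and then applies Korn's inequality on $\widehat\Omega$ exactly once. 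That is a two-line computation, but it leans entirely on the product structure of the rectangle $(0,K)\times(0,1)$. Your argument — per-patch projection onto rigid body motions with a Korn constant that is uniform by congruence, extraction of the rotation angle from the interface trace via the segment estimate $|\omega|^2\le 12\,\|\mathbf s\|_{L^2(\Gamma^{(k)})}^2$, and then telescoping plus Cauchy--Schwarz so that one factor $\sqrt K$ enters from the accumulated jumps and one from summing $\sum_k\omega_k^2$ over the $K$ patches — is longer, but buys generality and insight: it only requires a chain of uniformly shape-regular patches with bounded overlap (so it would cover, say, a curved chain of congruent patches, where no global scaling to a unit square exists), and it exhibits the actual mechanism of degeneration, namely the accumulation of patchwise rotations across interfaces. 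It is also closer in spirit to the paper's IETI analysis in Section~\ref{sec:ieti:theory}, which likewise reduces everything to the patch-local Korn constants $\alpha_k$ of \eqref{eq:localkorn0} plus interface coupling; in that sense your proof of this lemma anticipates the structure of the solver analysis, whereas the paper's proof is a shortcut specific to the rectangular geometry.
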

\begin{proof}
		An upper bound for $\alpha_{\Omega_K,\Gamma_D}$ is obtained for the choice
		$\mathbf v(x,y) := ((2y-1)x, -x^2)^\top$, which obviously satisfies the Dirichlet
		conditions. For this choice, we can explicitly
		compute $\alpha_{\Omega_K,\Gamma_D}^{-1} = \sqrt{1 + 2 K^2}\ge K$. (Note that for $\degree \ge 2$, the function $\mathbf v(x,y)$ is in the spline space $\mathbf V$, which means that the constant $\alpha_{\Omega_K,\Gamma_D}$ cannot be improved by restricting to the discrete space $\mathbf V$.)

		For a lower bound, we use Korn's second inequality for the
		parameter domain $\widehat\Omega:=(0,1)^2$.
		Let $\mathbf v(x,y)=(v_1(x,y),v_2(x,y))^\top$ be arbitrary and define
		$\mathbf w(x,y)=(w_1(x,y),w_2(x,y))^\top:=(v_1(Kx,y),\allowbreak K^{-1}v_2(Kx,y))^\top$. Then, we have using
		simple scaling arguments
		\begin{align*}
				&\frac{|\mathbf v|_{H^1(\Omega)}^2}{\|\epsilon(\mathbf v)\|_{L^2(\Omega)}^2}
			 \le 1+
			 \frac{
			 \|\partial_y v_1\|_{L^2(\Omega)}^2
			 +\|\partial_x v_2\|_{L^2(\Omega)}^2
			 }{
			  \|\partial_x v_1\|_{L^2(\Omega)}^2
			 +\tfrac12 \|\partial_y v_1 + \partial_x v_2\|_{L^2(\Omega)}^2
			 +\|\partial_y v_2\|_{L^2(\Omega)}^2
			 }
			 \\&\quad \le 1+
			 \frac{
			 \|\partial_y w_1\|_{L^2(\widehat\Omega)}^2
			 +\|\partial_x w_2\|_{L^2(\widehat\Omega)}^2
			 }{
			  K^{-2}\|\partial_x w_1\|_{L^2(\widehat\Omega)}^2
			 +\tfrac12 \|\partial_y w_1 + \partial_x w_2\|_{L^2(\widehat\Omega)}^2
			 +K^2\|\partial_y w_2\|_{L^2(\widehat\Omega)}^2
			 }\\&\quad
			 \lesssim K^2 \frac{|\mathbf w|_{H^1(\widehat\Omega)}^2}{\|\epsilon(\mathbf w)\|_{L^2(\widehat\Omega)}^2}
			 \le K^2 \alpha_{\widehat\Omega}^2,
		\end{align*}
		where $\alpha_{\widehat\Omega}>0$ is the constant resulting from
		the use of Korn's inequality on the parameter domain $\widehat \Omega$.
		Since this constant is independent of the actual
		geometry (particularly the number of patches $K$), we have $\alpha_{\widehat\Omega}\eqsim 1$. This finishes the proof.
\end{proof}

Based on these statements, the discretization error estimate~\eqref{eq:discrerror} degrades for $K\to\infty$.
For $\lambda=0$, the following lemma shows that the proposed isogeometric
discretization does not suffer from geometry locking. This estimate is restricted to
$\degree \ge 2$, which hints that it make sense to consider discretizations other than
the standard linear finite elements.
\begin{lemma}
	Let $\lambda=0$ and $\degree \ge 2$.
	For all $K\in \{1,2,3,\ldots\}$, we have
	\begin{equation}\label{eq:kornfreeresult}
			| \mathbf u - \mathbf u_h |_{H^1(\Omega)}^2
			\lesssim  \inf_{\mathbf v_h \in \mathbf V} |\mathbf u- \mathbf v_h|_{H^1(\Omega)}^2,
	\end{equation}
	where $\mathbf u$ and $\mathbf u_h$ are the solutions to the Problem~\ref{prob1} and the discretized Problem~\ref{prob2}, respectively. Under the assumptions of Lemma~\ref{lem:approx}, we have further
	\[
		| \mathbf u - \mathbf u_h |_{H^1(\Omega)}^2
		\lesssim \widetilde C_2^2 \pi^{-2q} \sum_{k=1}^K h_k^{2q} \| \mathbf u \|_{H^{q+1}(\Omega^{(k)})}^2.
	\]
\end{lemma}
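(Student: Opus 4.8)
The plan is to split the argument into a part that is unconditionally robust and a part where the hypothesis $\degree\ge2$ is genuinely needed. Since $\lambda=0$, the energy norm is $\sqrt{2\mu}\,\|\epsilon(\cdot)\|_{L^2(\Omega)}$, so Galerkin orthogonality and the Cauchy--Schwarz inequality give the energy best-approximation property $\|\epsilon(\mathbf u-\mathbf u_h)\|_{L^2(\Omega)}\le\inf_{\mathbf v_h\in\mathbf V}\|\epsilon(\mathbf u-\mathbf v_h)\|_{L^2(\Omega)}\le\inf_{\mathbf v_h\in\mathbf V}|\mathbf u-\mathbf v_h|_{H^1(\Omega)}=:\eta$, with a constant equal to one and thus independent of $K$. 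Introducing the scalar vorticity $\omega(\mathbf v):=\tfrac12(\partial_x v_2-\partial_y v_1)$ and splitting $\nabla\mathbf v$ into its symmetric and skew parts yields the pointwise, hence integrated, identity
\[
|\mathbf v|_{H^1(\Omega)}^2=\|\epsilon(\mathbf v)\|_{L^2(\Omega)}^2+2\,\|\omega(\mathbf v)\|_{L^2(\Omega)}^2 .
\]
Applying this to $\mathbf v=\mathbf u-\mathbf u_h$ shows that \eqref{eq:kornfreeresult} is equivalent to the single robust vorticity bound $\|\omega(\mathbf u-\mathbf u_h)\|_{L^2(\Omega)}\lesssim\eta$; everything else is the triangle inequality.

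To reach this bound I would pass to the $H^1$-orthogonal best approximation $\mathbf w_h\in\mathbf V$ and set $\mathbf g:=\mathbf u-\mathbf w_h$, so that $|\mathbf g|_{H^1(\Omega)}=\eta$ and $(\nabla\mathbf g,\nabla\psi_h)_{L^2(\Omega)}=0$ for all $\psi_h\in\mathbf V$. Because $\mathbf u_h$ is the $\epsilon$-projection of $\mathbf u$, the corrector $\phi:=\mathbf u_h-\mathbf w_h\in\mathbf V$ is exactly the $\epsilon$-projection of $\mathbf g$ onto $\mathbf V$, and $|\mathbf u-\mathbf u_h|_{H^1(\Omega)}^2=\eta^2+|\phi|_{H^1(\Omega)}^2$; since $\|\epsilon(\phi)\|_{L^2(\Omega)}\le\eta$ it therefore suffices to prove $\|\omega(\phi)\|_{L^2(\Omega)}\lesssim\eta$. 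Using the polarised identity $(\epsilon(\mathbf a),\epsilon(\mathbf b))=(\nabla\mathbf a,\nabla\mathbf b)-2(\omega(\mathbf a),\omega(\mathbf b))$ together with $(\nabla\mathbf g,\nabla\phi)=0$ gives the pivotal relation $\|\epsilon(\phi)\|_{L^2(\Omega)}^2=-2\,(\omega(\mathbf g),\omega(\phi))_{L^2(\Omega)}$, which concentrates the whole difficulty in the interaction of $\omega(\mathbf g)$ with the low-strain (bending) directions of $\mathbf V$.

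This is exactly where $\degree\ge2$ enters. The fields responsible for the small Korn constant — the explicit mode $((2y-1)x,-x^2)^\top$ from the preceding lemma and its relatives, whose vorticity $-2x$ grows across the patches — lie in $\mathbf V$ precisely because $\degree\ge2$. For any such smooth near-kernel field $\psi\in\mathbf V$ the orthogonality $(\nabla\mathbf g,\nabla\psi)=0$ converts into $(\omega(\mathbf g),\omega(\psi))=-\tfrac12(\epsilon(\mathbf g),\epsilon(\psi))$, and the point is that the right-hand side is much smaller than a bare Cauchy--Schwarz estimate suggests: because $\mathbf g$ is $H^1$-orthogonal to $\mathbf V$ it oscillates at the grid scale, so its strain pairs only weakly with the smooth, slowly varying strain $\epsilon(\psi)$ of a bending mode. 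Making this quantitative — a superapproximation (commutator) estimate expressing that $(\epsilon(\mathbf g),\epsilon(\psi))$ is of higher order in the mesh size than Cauchy--Schwarz gives, with a constant uniform in $K$ — is the main obstacle, and the step that forces $\degree\ge2$. I would realise it patchwise, using that all patches are congruent unit squares (so the patchwise Korn and trace constants are uniform) and that quasi-uniformity~\eqref{eq:ass:quasiuniform} provides the required inverse and approximation estimates on the tensor-product spline spaces, and then glue the local contributions. A discrete stream-function construction, testing $(\nabla\mathbf g,\nabla\,\curl\Phi_h)=0$ with $\Phi_h$ of degree $\degree+1$, is the natural device to access $\omega(\mathbf g)$ directly.

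Once $\|\omega(\phi)\|_{L^2(\Omega)}\lesssim\eta$ is established, combining it with the pivotal relation, the vorticity identity and the energy best-approximation property yields $|\mathbf u-\mathbf u_h|_{H^1(\Omega)}\lesssim\eta$, that is, \eqref{eq:kornfreeresult}. The second displayed estimate then follows at once by inserting the approximation bound of Lemma~\ref{lem:approx}, applied to $\mathbf v=\mathbf u$, into $\eta^2=\inf_{\mathbf v_h\in\mathbf V}|\mathbf u-\mathbf v_h|_{H^1(\Omega)}^2$.
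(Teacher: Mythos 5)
Your reductions are all correct: with $\lambda=0$ the Galerkin solution is the $\epsilon$-projection, so $\|\epsilon(\mathbf u-\mathbf u_h)\|_{L^2(\Omega)}\le\eta$; the pointwise splitting of $\nabla\mathbf v$ into symmetric and skew parts gives $|\mathbf v|_{H^1(\Omega)}^2=\|\epsilon(\mathbf v)\|_{L^2(\Omega)}^2+2\|\omega(\mathbf v)\|_{L^2(\Omega)}^2$; and your projection bookkeeping (the identity $|\mathbf u-\mathbf u_h|_{H^1(\Omega)}^2=\eta^2+|\phi|_{H^1(\Omega)}^2$ and the relation $\|\epsilon(\phi)\|_{L^2(\Omega)}^2=-2(\omega(\mathbf g),\omega(\phi))_{L^2(\Omega)}$) is valid. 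So the claim is indeed equivalent to the $K$-uniform vorticity bound $\|\omega(\mathbf u-\mathbf u_h)\|_{L^2(\Omega)}\lesssim\eta$. But that bound is precisely where the whole difficulty of the lemma lives, and your proposal does not prove it; you say so yourself (``the main obstacle''). Worse, the mechanism you sketch is not the right one. Superapproximation/commutator estimates produce gains in powers of the mesh size $h$, whereas the issue here is uniformity in the number of patches $K$ at fixed $h$: what you need is in effect a Korn inequality for the discrete corrector $\phi$, and the Korn constant of $\Omega_K$ degrades like $K^{-1}$ --- that degradation is exactly what must be circumvented, and nothing in your pivotal relation does it (it only yields $\|\epsilon(\phi)\|_{L^2(\Omega)}^2\le\sqrt2\,\eta\,\|\omega(\phi)\|_{L^2(\Omega)}$, which runs in the wrong direction). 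Also, your heuristic that $\mathbf g$ ``oscillates at the grid scale'' and therefore pairs weakly with bending strains is unsubstantiated: $\mathbf g$ is an $H^1$-projection error, not a grid-scale oscillation, and no quantitative, $K$-uniform version of this claim is given.

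The paper closes the gap by an exact --- not approximate --- cancellation, and this is the idea your proposal is missing. One applies Lemma~\ref{lem:korngen} on the unit square with the functional $\ell(w_1,w_2)=\int_0^1(2y-1)\,w_1(1,y)\,\mathrm dy$, translates, and sums over patches to get a Korn-type inequality on $\Omega_K$ whose constant is independent of $K$ but which carries the extra terms $\sum_k\bigl(\int_0^1(2y-1)\,w_1(k,y)\,\mathrm dy\bigr)^2$ measuring the bending amplitudes. Then one tests Galerkin orthogonality with the explicit piecewise-quadratic bending modes $\psi_k$ (equal to $((2y-1)x,-x^2)^\top$ for $x<k$, continued linearly for $x\ge k$); these lie in $\mathbf V$ exactly because $\degree\ge2$, and since $\epsilon(\psi_k)$ is the rank-one field $(2y-1)\chi_{x<k}\,\mathbf e_1\otimes\mathbf e_1$, the orthogonality $\int_\Omega\epsilon(\mathbf u-\mathbf u_h):\epsilon(\psi_k)\,\mathrm d\mathbf x=0$ converts, via the Dirichlet condition at $x=0$, into the \emph{identical vanishing} of each functional $\int_0^1(2y-1)\,(u_1-u_{h,1})(k,y)\,\mathrm dy$. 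The robust Korn inequality then gives $|\mathbf u-\mathbf u_h|_{H^1(\Omega)}\lesssim\|\epsilon(\mathbf u-\mathbf u_h)\|_{L^2(\Omega)}\le\eta$ directly. You actually name the mode $((2y-1)x,-x^2)^\top$ and note it belongs to $\mathbf V$ for $\degree\ge2$, but you never test the Galerkin orthogonality of the \emph{error} against it; instead you work with the $H^1$-projection error $\mathbf g$, for which no exact vanishing holds, and the argument stalls.
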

\begin{proof}
	Let $\ell(w_1,w_2):=\int_{0}^{1} (2y-1)\, w_1(1,y) \,\mathrm dy$. Due to the trace inequality, it is well-defined and bounded in $[H^1(\widehat\Omega)]^2$. Moreover, we immediately observe that $\ell(\mathbf r)\ne0$ for $\mathbf r$ as in~\eqref{eq:rigid}. Thus, Lemma~\ref{lem:korngen} yields
	\[
			| \mathbf w |_{H^1(\widehat\Omega)}
			\le \alpha_{\widehat\Omega,\ell}^{-1}  \left(
			\| \epsilon( \mathbf w ) \|_{L^2(\widehat\Omega)}
			+
			|\ell(\mathbf w)|
			\right),
	\]
	where $\alpha_{\widehat\Omega,\ell}>0$ is the constant from Lemma~\ref{lem:korngen}. Since this constant is independent of the actual
	geometry (particularly the number of patches $K$), we have $\alpha_{\widehat\Omega,\ell}\eqsim 1$.
	Since this statement is invariant under translations, we obtain also
	\begin{equation}\label{eq:robustproof}
			| \mathbf w |_{H^1(\Omega)}^2
						\lesssim \left(
							\| \epsilon( \mathbf w ) \|_{L^2(\Omega)}^2
							+
							\sum_{k=1}^K
							\left(\int_{0}^{1} (2y-1)\, w_1(k,y) \,\mathrm dy\right)^2
							\right)
	\end{equation}
	for all $\mathbf w(\mathbf x)=(w_1(\mathbf x), w_2(\mathbf x))^\top$.
	Define
	\[
			\psi_k(x,y) :=
			\begin{cases}
					((2y-1)x, -x^2)^\top & \mbox{for } x<k,\\
				     ((2y-1)k, k^2-2kx)^\top & \mbox{for } x\ge k
			\end{cases}
	\]
	and observe that
	\[
			\epsilon(\psi_k) = \begin{pmatrix}
					2y-1 & 0 \\ 0 & 0
			\end{pmatrix}
			\chi_{x<k},
	\]
	where $\chi_{x<k}$ is the characteristic function, i.e., it takes the value $1$ for
	$x<k$ and vanishes otherwise. Since $\psi_k \in \mathbf V$,
	the Galerkin orthogonality implies
	\[
			\int_\Omega \epsilon(\mathbf u-\mathbf u_h) : \epsilon(\psi_k) \,\mathrm d\mathbf x
				= 0
				   \quad \mbox{for all}\quad k\in\{1,\ldots,K\}.
	\]
	Let $\mathbf w:=\mathbf u-\mathbf u_h=(w_1,w_2)^\top$. Using the explicit representation of $\psi_k$ and the Dirichlet conditions on $\Gamma_D=\Gamma^{(0)}$, we obtain
	\[
			0 =  \int_0^k \int_{0}^{1} (2y-1)\partial_x  w_1(x,y) \,\mathrm dy
			\,\mathrm dx
			=  \int_{0}^{1} (2y-1)\, w_1(k,y) \,\mathrm dy.
	\]
	In combination with~\eqref{eq:robustproof}, this yields
	$
		| \mathbf u - \mathbf u_h |_{H^1(\Omega)}^2
						\lesssim \| \epsilon( \mathbf u -\mathbf u_h ) \|_{L^2(\Omega)}^2.
	$
	Using Galerkin orthogonality, we immediately obtain~\eqref{eq:kornfreeresult}.
	The second results directly follows using Lemma~\ref{lem:approx}.
\end{proof}

We are interested in a fast solver for this problem, which does not degrade for domains with small Korn constant. For the solution, we aim for a domain decomposition approach, where the convergence rates only depend on the Korn constants for the individual patches, which serve as substructures for the domain decomposition solver; specifically, we require that
\begin{equation}\label{eq:localkorn0}
	\|\epsilon(\mathbf{v})\|_{L^2(\kbox{\Omega})}
	\ge
	\alpha_k
	|\mathbf{v}|_{H^1(\kbox{\Omega})}
	\; \mbox{for all}\;
	\mathbf{v} \in [H^1({\kbox{\Omega}})]^2
	\; \mbox{with} \;
	\int_{\kbox{\Omega}} \curl \mathbf v \; \mathrm d \mathbf x = 0
	,
\end{equation}
which follows from \cite[eq.~(3.0.1) and (3.1.1)]{AcostaDuran:2017}. Certainly, the
constant $\alpha_k$ only depends on the shape of $\Omega^{(k)}$ and is independent
on the number of patches and the shape of the overall domain $\Omega$.

Finally, we give proofs for Lemmas~\ref{lem:korngen} and \ref{lem:approx}.

\begin{proofof}{of Lemma~\ref{lem:korngen}}
		Let $\mathbf v \in [H^1(\Omega)]^2$ arbitrary but fixed.
		\cite[eq.~(3.0.1) and (3.1.1)]{AcostaDuran:2017} states that there is a constant $\alpha_\Omega>0$ such that
		\begin{equation}\label{eq:globalkorn0}
			\|\epsilon(\mathbf{w})\|_{L^2(\Omega)}
			\ge
			\alpha_\Omega
			|\mathbf{w}|_{H^1(\Omega)}
			\; \mbox{for all}\;
			\mathbf{w} \in [H^1({\Omega})]^2
			\; \mbox{with} \;
			\int_{\Omega} \curl \mathbf w \; \mathrm d \mathbf x = 0
			.
		\end{equation}
		For $\mathbf r$ as in~\eqref{eq:rigid}, we have
		$\curl \mathbf r=2$.  For $\rho:=\tfrac 12 \int_{\Omega} \curl \mathbf v \; \mathrm d \mathbf x$, we have $\mathbf v = \rho \, \mathbf r + \mathbf w$, where
		$\int_{\Omega} \curl \mathbf w \; \mathrm d \mathbf x = 0$.
		Using the triangle inequality, the linearity of $\ell$ and its boundedness, we obtain
		\begin{align*}
			|\mathbf v|_{H^1(\Omega)}
			& \le
			|\mathbf w|_{H^1(\Omega)}
			+ |\rho\,\mathbf r|_{H^1(\Omega)}
			=
			|\mathbf w|_{H^1(\Omega)}
		+  \frac{|\mathbf r|_{H^1(\Omega)}}{|\ell(\mathbf r)|}	|\ell( \rho \mathbf r)|
		 	\\&
			\le
			|\mathbf w|_{H^1(\Omega)}
					+  \frac{|\mathbf r|_{H^1(\Omega)}}{|\ell(\mathbf r)|}	|\ell(\textbf w)|
					+  \frac{|\mathbf r|_{H^1(\Omega)}}{|\ell(\mathbf r)|}	|\ell( \textbf v)|
			\\&
			\le
			\left( 1 +  \frac{|\mathbf r|_{H^1(\Omega)}}{|\ell(\mathbf r)|} \|\ell\|_{[H^1(\Omega)]^*} \right)
				|\mathbf w|_{H^1(\Omega)}
				+\frac{|\mathbf r|_{H^1(\Omega)}}{|\ell(\mathbf r)|}  |\ell( \mathbf v)| ,
		\end{align*}
		where $\|\ell\|_{[H^1(\Omega)]^*}$ denotes the dual norm of $\ell$.
		Using~\eqref{eq:globalkorn0}, $\epsilon(\mathbf v-\mathbf w)
		=\epsilon(\rho\mathbf r)=0$, and $\alpha_\Omega \le 1$, we further have
		\begin{align*}
				|\mathbf v|_{H^1(\Omega)}
				& \le
				\alpha_\Omega^{-1}
				\left( 1 +  \frac{|\mathbf r|_{H^1(\Omega)}}{|\ell(\mathbf r)|}
				\max\{1,\|\ell\|_{[H^1(\Omega)]^*} \} \right)
					\left( \|\epsilon(\mathbf v)\|_{L^2(\Omega)} +  |\ell( \mathbf v)|\right),
		\end{align*}
		which finishes the proof for
		$\alpha_{\Omega,\ell}^{-1}= \alpha_\Omega
			\left( 1 +  \frac{|\mathbf r|_{H^1(\Omega)}}{|\ell(\mathbf r)|}
			\max\{1,\|\ell\|_{[H^1(\Omega)]^*} \} \right)^{-1}$.
\end{proofof}

\begin{proofof}{of Lemma~\ref{lem:approx}}
	This lemma is a slight generalization of \cite[Theorem~3.6]{takacs2018robust}. As in \cite{takacs2018robust}, we construct the function $\mathbf v_h$ patchwise. We choose $\mathbf v_h \circ \mathbf G_k := \widehat{\Pi}_k ( \mathbf u \circ \mathbf G_k)$ as in \cite[Eq.~(3.11)]{takacs2018robust}, where $\widehat{\Pi}_k$ is the projector from~\cite[Eq.~(3.9)]{takacs2018robust}. This method indeed gives a function $\mathbf v_h\in \mathbf V$, see~\cite[Lemma~3.4]{takacs2018robust}. For deriving the error estimates themselves, we do not use the $\degree$-robust estimates from~\cite{TakacsTakacs:2016}, but the more recent $\degree$-robust estimates from~\cite[Theorem 3.1]{SandeManniSpeleers:2019}. Together with~\eqref{eq:ass:nabla},
	and \eqref{eq:gridsize}, we obtain
	\begin{align*}
		\inf_{\mathbf v_h \in \mathbf V}| \mathbf u - \mathbf v_h |_{H^1(\Omega)}^2
		& \lesssim  \sum_{k=1}^K
		|  (I - \widehat{\Pi}_k) (\mathbf u\circ \mathbf G_k) |_{H^1}^2
		\le  \pi^{-2q} \sum_{k=1}^K \widehat h_k^{2q}
		|  \mathbf u\circ \mathbf G_k |_{H^{q+1}(\widehat \Omega)}^2,
	\end{align*}
	form which the desired result follows using~\eqref{eq:approxcond}.
\end{proofof}

\section{A IETI-DP solver for the elasticity problem}
\label{sec:IETISolver}

In this section, we outline the setup of the proposed IETI-DP method for solving the discretized elasticity problem~\eqref{eq:discelast}.
For the IETI-DP method, we have to assemble the variational problem
locally, i.e., we have
\begin{align} \nonumber
	\kbox{a} (\kbox{\mathbf{u}}, \kbox{\mathbf{v}})
	&:= 2\mu (\epsilon(\kbox{\mathbf{u}}), \epsilon(\kbox{\mathbf{v}}))_{L^2(\Omega^{(k)})}
	+ \lambda (\divergence \kbox{\mathbf{u}}, \divergence \kbox{\mathbf{v}})_{L^2(\Omega^{(k)})} \\
	\langle \kbox{F}, \kbox{\mathbf{v}} \rangle
	&:=(\mathbf{f},\kbox{\mathbf{v}})_{L^2(\Omega^{(k)})} + (\mathbf{g}_N, \kbox{\mathbf{v}})_{L^2(\kbox{\Gamma_N})},
\end{align}
where $\kbox{\Gamma_N}:=\Gamma_N\cap\partial\kbox{\Omega}$.
If the problems would not be coupled, we would have
$\kbox{\mathbf{u}} \in \kbox{\mathbf V}$ with
\[
		\kbox{a} (\kbox{\mathbf{u}}, \kbox{\mathbf{v}})
		\stackrel{!}{=}
		\langle \kbox{F}, \kbox{\mathbf{v}} \rangle
		\quad \mbox{for all}\quad \kbox{\mathbf{v}} \in \kbox{\mathbf V}.
\]
By fixing bases, we obtain linear systems
\begin{equation}
  \label{eq:stiffness1}
    \kbox{A} \kbox{\underline{\mathbf{u}}}
    \stackrel{!}{=} \underline{\mathbf{f}}^{(k)}.
\end{equation}
Here and in what follows, underlined quantities refer to the coefficient
representations of the corresponding functions.
We represent the spaces $\mathbf{V}^{(k)}$ as direct sum
$\mathbf{V}^{(k)} = \mathbf{V}^{(k)}_{\Gamma} \oplus \mathbf{V}^{(k)}_{\I}$, where $\mathbf{V}^{(k)}_{\I}$ is spanned by the basis functions which vanish on the interfaces and $\mathbf{V}^{(k)}_{\Gamma}$ is spanned by the remaining functions, i.e., the functions that are active on the interfaces (which includes the primal degrees of freedom). Assuming a corresponding ordering of the basis functions, we have
\begin{equation}
  \label{eq:patchDecomposedMatrix}
		\begin{pmatrix}
		  \kbox{A_{\Gamma\Gamma}} & \kbox{A_{\Gamma\I}} \\
		  \kbox{A_{\I\Gamma}}     & \kbox{A_{\I\I}}     \\
		\end{pmatrix}
        \begin{pmatrix}
          \underline{\mathbf{u}}_\Gamma^{(k)}\\
          \underline{\mathbf{u}}_\I^{(k)}
        \end{pmatrix}
        \stackrel{!}{=}
        \begin{pmatrix}
          \underline{\mathbf{f}}_\Gamma^{(k)} \\
          \underline{\mathbf{f}}_\I^{(k)}
        \end{pmatrix}.
\end{equation}
These local systems correspond to pure Neumann problems, unless the corresponding patch contributes to the Dirichlet boundary. Therefore, these systems are not uniquely solvable since the constant is in the null space of $\kbox{A}$.
To ensure that the system matrices of the patch-local problems are non-singular, we introduce the function values on each of the corners of the patch are primal degrees of freedom, i.e.,
\[
			\mathbf u^{(k)}(\textbf x_j) = \mathbf u^{(\ell)}(\textbf x_j)
			\qquad\mbox{for all}\qquad
			j=1,\ldots,J
			\mbox{ and }
			k,\ell \in \mathcal N_{\textbf x}(j).
\]
The convergence analysis that we develop carries over automatically to the case where additional primal degrees of freedom are introduced, like the edge averages of $\mathbf u$ or its normal component;  we consider such choices in the numerical experiments presented in Section~\ref{sec:num}. For the theoretical considerations, we require only the corner values to be chosen as primal
degrees of freedom.

The corresponding Lagrangian multipliers are denoted by $\kbox{\mu}$. We obtain
\begin{equation}
  \label{eq:localElastAverage}
    \bar{A}^{(k)}\bar{\underline{\mathbf{u}}}^{(k)} :=
		\begin{pmatrix}
		  \kbox{A_{\Gamma\Gamma}}  &\kbox{A_{\Gamma\I}} &\kbox{C}^\top\\
		  \kbox{A_{\I\Gamma}}      &\kbox{A_{\I\I}}     &0\\
		  \kbox{C}                &0                 &0\\
		\end{pmatrix}
        \begin{pmatrix}
          \kbox{\underline{\mathbf{u}}_\Gamma}\\
          \kbox{\underline{\mathbf{u}}_\I}\\
          \kbox{\underline{\mu}}
        \end{pmatrix}
       \stackrel{!}{=}
        \begin{pmatrix}
          \kbox{\underline{\mathbf{f}}_\Gamma} \\
          \kbox{\underline{\mathbf{f}}_\I} \\
          0
        \end{pmatrix}=:\bar{\underline{\mathbf{f}}}^{(k)},
\end{equation}
where $\kbox{C}$ evaluates the primal degrees of freedom associated to the patch $\kbox\Omega$,
such that $\kbox{C} \kbox{\underline{\mathbf{u}}_\Gamma} = 0$.
The continuity between the patches is enforced
by the matrices $\kbox{B}$. The term
$
		\sum_{k=1}^K \kbox{B} \underline{\mathbf u}_\Gamma^{(k)}
$
evaluates to a vector containing the differences of the coefficients of any two matching
basis functions. Here, we do not include
the vertex values of (see Figure~\ref{fig:nonredundant}) since these are primal
degrees of freedom. The relation
\[
  \sum_{k=1}^K \kbox{B} \underline{\mathbf u}_\Gamma^{(k)} = 0
\]
guarantees the continuity across the patches.
\begin{figure}
	\begin{center}
	  \begin{tikzpicture}
			\fill[gray!15] (-0.2,0) -- (1.5,0) -- (1.5,1.7) -- (-0.2,1.7);
			\fill[gray!15] (-0.2,-1) -- (1.5,-1) -- (1.5,-2.7) -- (-0.2,-2.7);
			\fill[gray!15] (4.2,0) -- (2.5,0) -- (2.5,1.7) -- (4.2,1.7);
			\fill[gray!15] (4.2,-1) -- (2.5,-1) -- (2.5,-2.7) -- (4.2,-2.7);

			\draw (-0.2,0) -- (1.5,0) -- (1.5,1.7) node at (0.7,0.8) {$\Omega^{(1)}$};
			\draw (-0.2,-1) -- (1.5,-1) -- (1.5,-2.7) node at (0.7,-1.85) {$\Omega^{(3)}$};
			\draw (4.2,0) -- (2.5,0) -- (2.5,1.7) node at (3.4,0.8) {$\Omega^{(2)}$};
			\draw (4.2,-1) -- (2.5,-1) -- (2.5,-2.7) node at (3.4,-1.85) {$\Omega^{(4)}$};

			\draw (1.5,0) node[circle, fill, inner sep = 2pt] (A1) {};
			\draw (1.5,0.75) node[circle, fill, inner sep = 2pt] (A2) {};
			\draw (1.5,1.5) node[circle, fill, inner sep = 2pt] (A3) {};
			\draw (0.75,0) node[circle, fill, inner sep = 2pt] (A4) {};
			\draw (0,0) node[circle, fill, inner sep = 2pt] (A5) {};

			\draw (2.5,0) node[circle, fill, inner sep = 2pt] (B1) {};
			\draw (2.5,0.75) node[circle, fill, inner sep = 2pt] (B2) {};
			\draw (2.5,1.5) node[circle, fill, inner sep = 2pt] (B3) {};
			\draw (3.25,0) node[circle, fill, inner sep = 2pt] (B4) {};
			\draw (4,0) node[circle, fill, inner sep = 2pt] (B5) {};

			\draw (1.5,-1) node[circle, fill, inner sep = 2pt] (C1) {};
			\draw (1.5,-1.75) node[circle, fill, inner sep = 2pt] (C2) {};
			\draw (1.5,-2.5) node[circle, fill, inner sep = 2pt] (C3) {};
			\draw (0.75,-1) node[circle, fill, inner sep = 2pt] (C4) {};
			\draw (0,-1) node[circle, fill, inner sep = 2pt] (C5) {};

			\draw (2.5,-1) node[circle, fill, inner sep = 2pt] (D1) {};
			\draw (2.5,-1.75) node[circle, fill, inner sep = 2pt] (D2) {};
			\draw (2.5,-2.5) node[circle, fill, inner sep = 2pt] (D3) {};
			\draw (3.25,-1) node[circle, fill, inner sep = 2pt] (D4) {};
			\draw (4,-1) node[circle, fill, inner sep = 2pt] (D5) {};

			\draw[<->, line width = 1pt, latex-latex]
			(A2) edge (B2) (A3) edge (B3)
			(A4) edge (C4) (A5) edge (C5)
			(B4) edge (D4) (B5) edge (D5)
			(C2) edge (D2) (C3) edge (D3);
			\end{tikzpicture}
	  \caption{Enforcing continuity across the patches. The corners are excluded.}
      \label{fig:nonredundant}
	\end{center}
\end{figure}
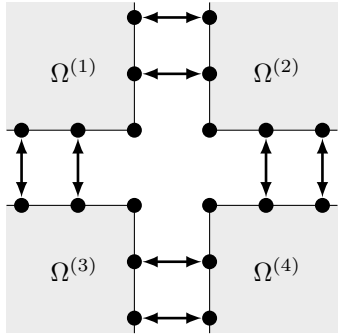

Moreover, we introduce the primal problem, i.e., the global problem for the primal degrees of freedom. We use a $A^{(k)}$-orthogonal basis for the primal degrees of freedom. This basis is represented in terms of the basis functions of the basis for $\mathbf V_\Gamma^{(k)}\times \mathbf V_\I^{(k)}$ using the matrix $\Psi^{(k)}$, which are the solution of the system
\begin{equation}\label{eq:basisdef}
		\begin{pmatrix}
		  \kbox{A_{\Gamma\Gamma}}  &\kbox{A_{\Gamma\I}} &\kbox{C}^\top\\
		  \kbox{A_{\I\Gamma}}      &\kbox{A_{\I\I}}     &0\\
		  \kbox{C}                &0                 &0\\
		\end{pmatrix}
		\begin{pmatrix}
					\kbox{\Psi_{\Gamma}} \\
					\kbox{\Psi_{\I}} \\
					\kbox{\Phi} \\
		\end{pmatrix}
		=
		\begin{pmatrix}
					0 \\
					0 \\
					\kbox{R}
		\end{pmatrix},
\end{equation}
where $\kbox{R}$ is a boolean matrix that selects the primal degrees of freedom which are active on the patch~$\Omega^{(k)}$.
We have $\kbox{R}\in \mathbb{R}^{N^{(k)}_{\Pi}\times N_\Pi}$, where $N^{(k)}_{\Pi}$ is the number of primal degrees of freedom corresponding to the $k$'th patch (thus $N^{(k)}_{\Pi}=8$ if the patch does not contribute to the Dirichlet boundary) and $N_\Pi$ is the overall number of primal degrees of freedom.

We define the system matrix, right-hand side and jump matrix for the primal problem as
\[
A_\Pi := \sum^K_{k=1} \kbox{\Psi}^\top \kbox{A}\kbox{\Psi},\quad
\underline{\mathbf f}_\Pi := \sum^K_{k=1} \kbox{\Psi}^\top \kbox{\underline{\bar{\mathbf f}}}
\quad \text{and}\quad
B_\Pi := \sum^K_{k=1} \kbox{B}\kbox{\Psi}.
\]
Finally we are able to write down the overall IETI-DP system:
\begin{equation}
  \label{eq:saddleForm}
		\begin{pmatrix}
				\kkbox{\bar{A}}{1}   &&&& \kkbox{\bar{B}}{1}^\top \\
				& \ddots              &&& \vdots                  \\
				&& \kkbox{\bar{A}}{K}  && \kkbox{\bar{B}}{K}^\top \\
				&&& {A}_\Pi         & {B}_\Pi^\top        \\
		    \kkbox{\bar{B}}{1} & \hdots & \kkbox{\bar{B}}{K} & {B}_\Pi  \\
		\end{pmatrix}
		\begin{pmatrix}
			\kkbox{\underline{\bar{\mathbf x}}}1\\
			\vdots\\
			\kkbox{\underline{\bar{\mathbf x}}}K\\
			\underline{{\mathbf x}}_\Pi \\
			\underline{\lambda}
		\end{pmatrix}
		=
		\begin{pmatrix}
			\kkbox{\underline{\bar{\mathbf f}}}1\\
			\vdots\\
			\kkbox{\underline{\bar{\mathbf f}}}K\\
			\underline{\mathbf f}_\Pi \\
			0
		\end{pmatrix}.
\end{equation}
For solving this linear system, we take the Schur complement with
respect to the Lagrange multipliers $\underline{\lambda}$.
This means that we solve
\begin{equation}
  \label{eq:lambdaSys}
  F\underline{\lambda} = \underline{g},
\end{equation}
where
\begin{equation*}
	F := {B}_\Pi {A}^{-1}_\Pi {B}_\Pi^\top+\sum^{K}_{k = 1}  \kbox{\bar B} \kbox{\bar{A}}^{-1}  \kbox{\bar B}^{\top}
	\;\mbox{and}\;
\underline{g}:= {B}_\Pi {A}^{-1}_\Pi{\underline{\mathbf{f}}}_\Pi + \sum^{K}_{k = 1}  \bar{B}^{(k)} \kbox{\bar{A}}^{-1}\bar{\underline{\mathbf{f}}}^{(k)}
\end{equation*}
The patch-local solutions are then recovered by
\[
\underline{{\mathbf{x}}}_\Pi = {A}_\Pi^{-1}\left(\underline{{\mathbf{f}}}_\Pi- B_\Pi^{\top}\underline{\lambda}\right)
\quad\mbox{and}\quad
\kbox{\underline{\bar{\mathbf{x}}}} = \kbox{\bar{A}}^{-1}\left(\underline{\bar{\mathbf{f}}}^{(k)}- \kbox{\bar B}^{\top}\underline{\lambda}\right).
\]
The final solution is then obtained by distributing $\underline{{\mathbf{x}}}_\Pi$ to the patches.

We solve the system~\eqref{eq:lambdaSys} with a conjugate gradient solver, preconditioned with the scaled Dirichlet preconditioner
\[
		M_{\mathrm{sD}} := \sum_{k=1}^K \kbox{B} \scaling_k^{-1} \kbox{S_{A}} \scaling_k^{-1} \kbox{B}^\top
		,
\]
where $\scaling_k:=2I$ is set up based on the principle of multiplicity scaling and $\kbox{S_A} :=\kbox{A_{\Gamma\Gamma}}-\kbox{A_{\Gamma\I}}\kbox{A_{\I\I}}^{-1}\kbox{A_{\I\Gamma}}$ is the local Schur complement based on the splitting~\eqref{eq:patchDecomposedMatrix}.

\begin{remark}
  In this paper we focus on solving \eqref{eq:lambdaSys} for the symmetric positive definite matrix $F$.
  This requires the exact representation of the inverses of $\kbox{\bar A}$. If these are unavailable, then efficient approximations of the inverses can be used by instead solving \eqref{eq:saddleForm}. For more information see, e.g., \cite{schneckenleitner2021inexact, montardini2022ieti}.
\end{remark}

\section{Condition number analysis for the IETI solver}\label{sec:ieti:theory}

The convergence rates of the conjugate gradient solver can be estimated based on the condition number of the preconditioned system $ M_{\mathrm{sD}} \bar F$, based on the framework introduced in \cite{MandelDohrmannTezaur:2005a} and based on the results from~\cite{SchneckenleitnerTakacs:2020}, where a IETI-DP solver for the Poisson problem has been analyzed in detail.

In this paper, we are interested in a IETI-DP solver for the linearized elasticity problem. The extension of the results from~\cite{SchneckenleitnerTakacs:2020} to the vector-valued is not an issue since all proofs could be reused verbatim. The main difference is that we have to show the coercivity of the bilinear form associated to the linearized elasticity problem, which again requires the use of the Korn inequality. In general, not every patch touches the Dirichlet boundary $\Gamma_D$. Thus, we need a framework that allows the application of the Korn inequality.

Let $\mathbf V_d:=\mathbf V^{(1)} \times \cdots \times \mathbf V^{(K)}$.  The space $\widetilde{\mathbf V} \subseteq \mathbf V_d$ models all functions whose corner values are continuous:
\[
		\widetilde{\mathbf V} :=
		\left\{
				\mathbf v \in \mathbf V_d
				:
				\mathbf v^{(k)}(\textbf x_j)=\mathbf v^{(\ell)}(\textbf x_j)
				\mbox{ for all }
				k,\ell\in\mathcal N_{\textbf x}(j)
				\mbox{ with }
				j\in \{1,\ldots,J\}
		\right\},
\]
where $\mathbf v=(\mathbf v^{(1)},\cdots,\mathbf v^{(K)})$.

In the following, we show that the $H^1$-seminorm can be bounded from above by the semi-norm $\|\epsilon(\cdot)\|_{L^2(\kbox{\Omega})}$ if we guarantee that the corner values vanish. This, of course, is only possible for spaces, where the notation of the corner values is well-defined, like the discrete space $\kbox{\mathbf V}$. Let $\mathcal Q_k:=\{ \mathbf v : \mathbf v \circ \mathbf G_k \mbox{ is bilinear} \}$ be the space of all isogeometric functions with bilinear pre-image. Since a bilinear function is uniquely given by its values at the corners of the unit square $\widehat \Omega$, the following projector is well-defined:
\[
		\kbox{\Pi_F}: \kbox{\mathbf V} \to \mathcal Q_k,
		\quad
		\mbox{such that}
		\quad
		(\kbox{\Pi_F} \kbox{\mathbf v})(\textbf x_\ell) = \kbox{\mathbf v}(\textbf x_\ell)
		\quad \mbox{for all} \quad
		\ell\in \mathcal N_{\textbf x}^{-1}(k).
\]

The following lemma gives upper bounds both for the $H^1$-seminorm and the $L^\infty_0$-seminorm, given by
\[
	| \kbox{\mathbf v} |_{L^\infty_0(\kbox{\Omega})}
	:=
	\inf_{\mu_k\in\mathbb  R^2}
	\| \kbox{\mathbf v} - \mu_k \|_{L^\infty(\kbox{\Omega})},
\]
of $\kbox{\mathbf v}-\kbox{\Pi_F} \kbox{\mathbf v}$, which vanishes on the corners by construction.

\begin{lemma}\label{lem:korn2}
	For all $k=1,\ldots,K$,	the inequalities
	\begin{align*}
			| \kbox{\mathbf v}-\kbox{\Pi_F} \kbox{\mathbf v}|_{H^1(\kbox{\Omega})}
			&\lesssim
			\alpha_k^{-1} \Lambda_k^{1/2}
			\|\epsilon(\kbox{\mathbf v})\|_{L^2(\kbox{\Omega})},
			\\
				| \kbox{\mathbf v}-\kbox{\Pi_F} \kbox{\mathbf v}|_{L_0^\infty(\kbox{\Omega})}
			&\lesssim
				\alpha_k^{-1} \Lambda_k^{1/2}
				\|\epsilon(\kbox{\mathbf v})\|_{L^2(\kbox{\Omega})}
	\end{align*}
	hold for all $\mathbf v\in \widetilde{\mathbf V}$, where
	$\alpha_k$ is the constant from~\eqref{eq:localkorn0},
	$\Lambda_k:=1+\log \degree + \log \frac{H_k}{h_k}$.
\end{lemma}
\begin{proof}
	For any $\mathbf v \in \widetilde{\mathbf V}$, there is a decomposition $\kbox{\mathbf v} = \kbox{\mathbf w} + \rho_k \mathbf r$,
	with $\int_{\kbox{\Omega}} \curl \kbox{\mathbf w}\,\mathrm dx =0$,
	$\mathbf r$ as in~\eqref{eq:rigid} and $\rho_k\in \mathbb R$. Using the triangle
	inequality, we have
	\begin{equation}\label{eq:korn2:triangle}
			|\kbox{\mathbf v} - \kbox{\Pi_F} \kbox{\mathbf v} |_{H^1(\kbox{\Omega})}
			\le
			|\kbox{\mathbf w}  |_{H^1(\kbox{\Omega})}
			+ |\kbox{\Pi_F} \kbox{\mathbf v} - \rho_k \mathbf r |_{H^1(\kbox{\Omega})}.
	\end{equation}
	Since both $\kbox{\Pi_F} \kbox{\mathbf v} $ and $\mathbf r$ are bilinear functions,
	the term $\kbox{\Pi_F} \kbox{\mathbf v} - \rho_k \mathbf r$ is uniquely determined by the
	function values at the corners. Since only the $H^1$-seminorm is considered, only
	deviations from the average matter; thus
	\begin{equation}\label{eq:korn2:est1}
			|\kbox{\Pi_F} \kbox{\mathbf v} - \rho_k \mathbf r |_{H^1(\kbox{\Omega})}
			\lesssim
			\psi_k:=
			\inf_{\mu_k \in \mathbb R^2}
			\sum_{\ell\in \mathcal N_{\mathbf x}^{-1}(k)}
			 |(\kbox{\Pi_F} \kbox{\mathbf v})(\mathbf x_\ell) - \rho_k \mathbf r(\mathbf x_\ell) - \mu_k|.
	\end{equation}
	Since $\kbox{\Pi_F} \kbox{\mathbf v}$ and $\kbox{\mathbf v}$ coincide at the corners, we obtain
	\[
				\psi_k
				=
				\inf_{\mu_k \in \mathbb R^2}
				\sum_{\ell\in \mathcal N_{\mathbf x}^{-1}(k)}
				 |\kbox{\mathbf v}(\mathbf x_\ell) - \rho_k \mathbf r(\mathbf x_\ell) - \mu_k|
				 =
				\inf_{\mu_k \in \mathbb R^2}
				\sum_{\ell\in \mathcal N_{\mathbf x}^{-1}(k)}
			 |\kbox{\mathbf w}(\mathbf x_\ell) - \mu_k|
				 .
		\]
	Using \cite[Lemma~4.14]{SchneckenleitnerTakacs:2020} and the usual relation between
	parameter domain and physical domain and the Poincaré--Friedrichs inequality, we obtain further
	\begin{equation}\label{eq:korn2:est2}
				\psi_k
				\lesssim
				\Lambda_k^{1/2}
				\inf_{\mu_k \in \mathbb R^2}
				\|\kbox{\mathbf w} - \mu_k\|_{H^1(\kbox{\Omega})}\\
				\lesssim
				\Lambda_k^{1/2}
				|\kbox{\mathbf w} |_{H^1(\kbox{\Omega})}
				.
		\end{equation}
		Together with~\eqref{eq:korn2:est1} and \eqref{eq:korn2:triangle}, this yields
		\[
		\begin{aligned}
				|\kbox{\mathbf v} - \kbox{\Pi_F} \kbox{\mathbf v}  |_{H^1(\kbox{\Omega})}
				&\lesssim
				\Lambda_k^{1/2}
				|\kbox{\mathbf w} |_{H^1(\kbox{\Omega})}
				.
		\end{aligned}
		\]
		Finally, \eqref{eq:localkorn0} gives the desired first estimate. For the
		second result, use again the triangle inequality, the fact that the $L_0^\infty$-seminorm of the bilinear function $\kbox{\Pi_F} \kbox{\mathbf v} - \rho_k \mathbf r $ can be bounded by its values at the corner,
		\cite[Lemma~4.14]{SchneckenleitnerTakacs:2020}, and~\eqref{eq:korn2:est1} and~\eqref{eq:korn2:est2} yield
		\[
		\begin{aligned}
				| \kbox{\mathbf v}-\kbox{\Pi_F} \kbox{\mathbf v}|_{L_0^\infty(\kbox{\Omega})}
				\lesssim
				|\kbox{\mathbf w}  |_{L_0^\infty(\kbox{\Omega})}
					+ \psi_k
				\lesssim \Lambda_k^{1/2}
								|\kbox{\mathbf w}  |_{H^1(\kbox{\Omega})},
		\end{aligned}
		\]
		from which the second estimate follows using \eqref{eq:localkorn0}.
\end{proof}

We build our main condition number estimate on the framework from~\cite{MandelDohrmannTezaur:2005a}. To obtain a concise notation, let $\mathbf W_d:=\mathbf W^{(1)} \times \cdots \times \mathbf W^{(K)}$ with $\mathbf W^{(k)} := \{ \mathbf v|_{\partial\Omega^{(k)}} \,:\, \mathbf v\in \mathbf V^{(k)} \}$ be the skeleton space and let
\[
		\widetilde{\mathbf W} :=
		\left\{
				\mathbf w \in \mathbf W_d
				:
				\mathbf w^{(k)}(\textbf x_j)=\mathbf w^{(\ell)}(\textbf x_j)
				\; \forall\;
				k,\ell\in\mathcal N_{\textbf x}(j)
				\mbox{ with }
				j\in \{1,\ldots,J\}
		\right\},
\]
where $\mathbf w=(\mathbf w^{(1)},\cdots,\mathbf w^{(K)})$, be the corresponding space which is continuous at the corners.
Here and in what follows, for any function, say $\kbox{\mathbf w}$
in $\kbox{\mathbf W}$, the corresponding underlined
symbol, here $\kbox{\underline{\mathbf w}}$, denotes the representation
of the corresponding functions with respect to the basis for the space
$\kbox{\mathbf W}$. The overall vector $\underline{\mathbf w}$ is a block-vector containing the local contributions $\kbox{\underline{\mathbf w}}$.
Analogously, $B$ is block-row matrix containing the local matrices $\kbox B$ and  $\scaling$ and $S_A$ are block-diagonal matrices containing the local matrices $\scaling_k$ and $\kbox{S_A}$, respectively.
\cite[Theorem~22]{MandelDohrmannTezaur:2005a} states that the condition number bound depends on the maximum value of the ratio
\begin{equation}\label{eq:ratio}
		\frac{\| \scaling^{-1} B^\top B \underline{\mathbf v} \|_{S_A}^2}
		{\| \underline{\mathbf v} \|_{S_A}^2},
\end{equation}
where the vector $\underline{\mathbf v}$ is a coefficient representation of a function $\mathbf v \in \widetilde{\mathbf W}$. The following Lemma gives an upper bound for that ratio.

\begin{lemma}\label{lem:BdtB:A}
	For all $\mathbf v \in \widetilde{\mathbf W}$ with coefficient representation $\underline{\mathbf v}$, the estimate
	\[
		\| \scaling^{-1} B^\top B \underline{\mathbf v} \|_{S_A}^2
		\lesssim
		\frac{\mu+\lambda}{\mu}\,
		\degree
		\left(1+\log \degree+\max_{k=1,\ldots,K} \log\frac{H_k}{h_k}\right)^2
		\left(\max_{k=1,\ldots,K}\alpha_k^{-2}\right)
		\| \underline{\mathbf v} \|_{S_A}^2
	\]
	holds.
\end{lemma}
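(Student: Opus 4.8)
The plan is to exploit the block-diagonal structure of $S_A$ and $\scaling$ together with the difference structure of $B$, so that the global ratio~\eqref{eq:ratio} splits into patch-local contributions, and then to reuse the Poisson analysis of~\cite{SchneckenleitnerTakacs:2020} at every step that is independent of the underlying PDE, invoking the elasticity-specific estimate of Lemma~\ref{lem:korn2} only where the elasticity energy must be compared with the $H^1$-seminorm. First I would write
\[
	\| \scaling^{-1} B^\top B \underline{\mathbf v} \|_{S_A}^2
	= \sum_{k=1}^K \| \underline{\mathbf z}^{(k)} \|_{\kbox{S_A}}^2,
	\qquad
	\underline{\mathbf z}^{(k)} := (\scaling^{-1} B^\top B \underline{\mathbf v})^{(k)},
\]
and denote by $\mathbf z^{(k)}\in\mathbf W^{(k)}$ the associated skeleton function. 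Because $B$ omits the primal corner degrees of freedom and $\mathbf v\in\widetilde{\mathbf W}$ is continuous at the corners, on each shared edge $\Gamma^{(k,\ell)}$ the function $\mathbf z^{(k)}$ equals the multiplicity-scaled jump $\tfrac12(\mathbf v^{(k)}-\mathbf v^{(\ell)})$ (recall $\scaling_k=2\I$) and vanishes at every corner of $\Omega^{(k)}$.

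Next I would bound each local term. Since $\|\underline{\mathbf z}^{(k)}\|_{\kbox{S_A}}^2$ is the minimal elasticity energy among all discrete extensions of $\mathbf z^{(k)}$, the boundedness~\eqref{eq:a:bounded} bounds it by $(\mu+\lambda)$ times the minimal $H^1$-seminorm among such extensions. From here the argument is purely about the discrete spline spaces, the jump operator and the $H^1$-seminorm, and is therefore governed by the discrete edge-extension estimates of~\cite{SchneckenleitnerTakacs:2020}, which contribute the factor $\degree$ and one logarithmic factor $\Lambda_k$ (the second discrete-Sobolev logarithm is already built into Lemma~\ref{lem:korn2}). To apply them I would split the edge jump into two corner-vanishing pieces using that the bilinear interpolants agree along a shared edge: on $\Gamma^{(k,\ell)}$,
\[
	\mathbf v^{(k)}-\mathbf v^{(\ell)}
	= \bigl(\mathbf v^{(k)}-\kbox{\Pi_F}\mathbf v^{(k)}\bigr)
	- \bigl(\mathbf v^{(\ell)}-\kkbox{\Pi_F}{\ell}\mathbf v^{(\ell)}\bigr),
\]
since $\kbox{\Pi_F}\mathbf v^{(k)}$ and $\kkbox{\Pi_F}{\ell}\mathbf v^{(\ell)}$ are bilinear, coincide at the two shared corners, and hence coincide on the whole edge. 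The cited inequalities then bound the local term by $(\mu+\lambda)\,\degree\,\Lambda_k$ times the corner-reduced trace energies of the neighbors $\mathbf v^{(\ell)}$, $\ell\in\{k\}\cup\mathcal N_\Gamma(k)$, each dominated by $|U^{(\ell)}-\kkbox{\Pi_F}{\ell}U^{(\ell)}|_{H^1(\Omega^{(\ell)})}^2$, where $U^{(\ell)}\in\mathbf V^{(\ell)}$ realizes $\|\underline{\mathbf v}^{(\ell)}\|_{\kkbox{S_A}{\ell}}^2 = a^{(\ell)}(U^{(\ell)},U^{(\ell)})$.

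The elasticity-specific work is the final comparison, and this is where Lemma~\ref{lem:korn2} enters. Applying its first estimate on patch $\ell$ to $U^{(\ell)}$ gives
\[
	|U^{(\ell)}-\kkbox{\Pi_F}{\ell}U^{(\ell)}|_{H^1(\Omega^{(\ell)})}^2
	\lesssim
	\alpha_\ell^{-2}\,\Lambda_\ell\,\|\epsilon(U^{(\ell)})\|_{L^2(\Omega^{(\ell)})}^2
	\le
	\frac{\Lambda_\ell}{2\mu}\,\alpha_\ell^{-2}\,\|\underline{\mathbf v}^{(\ell)}\|_{\kkbox{S_A}{\ell}}^2,
\]
where the last step uses $a^{(\ell)}(\cdot,\cdot)\ge 2\mu\|\epsilon(\cdot)\|_{L^2(\Omega^{(\ell)})}^2$. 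The Korn factor $\alpha_\ell^{-2}$ and the second logarithm (upgrading $\Lambda$ to $\Lambda^2$) appear exactly here, while the energy bounds supply $\tfrac{\mu+\lambda}{\mu}$. Finally I would sum over $k$, replace $\Lambda_k,\alpha_k$ by their maxima, and invoke the bounded-neighbor assumption~\eqref{eq:ass:neighbor} so that each $\|\underline{\mathbf v}^{(\ell)}\|_{\kkbox{S_A}{\ell}}^2$ is counted a uniformly bounded number of times; since $\sum_\ell\|\underline{\mathbf v}^{(\ell)}\|_{\kkbox{S_A}{\ell}}^2 = \|\underline{\mathbf v}\|_{S_A}^2$, this yields the claimed bound.

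The main obstacle I anticipate is the consistent bookkeeping of the rigid-body and corner components: Korn's inequality~\eqref{eq:localkorn0} controls only the corner-reduced part of a function, so the jump must be decomposed into two separately corner-vanishing pieces before Lemma~\ref{lem:korn2} can be applied to each side. The identity above, based on the agreement of the bilinear interpolants along a shared edge, is what legitimizes this decomposition, and verifying that the resulting pieces are precisely the objects estimated by Lemma~\ref{lem:korn2}—so that no rigid-body contribution leaks into the jump—is the delicate point. Everything else is a careful but routine reuse of the scalar analysis of~\cite{SchneckenleitnerTakacs:2020}.
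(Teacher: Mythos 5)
Your proposal is correct and follows essentially the same route as the paper's proof: the same reduction of the Schur-complement energy to patchwise minimal $H^1$-extension energies (costing the factor $\tfrac{\mu+\lambda}{\mu}$), the same discrete trace/extension and edge-splitting estimates from \cite{SchneckenleitnerTakacs:2020} (costing $\degree$ and one logarithm), the same representation of the jump and its decomposition into two corner-vanishing pieces via the agreement of the bilinear interpolants $\Pi_F^{(k)}\mathbf v^{(k)}$ and $\Pi_F^{(\ell)}\mathbf v^{(\ell)}$ on shared edges, and the same final application of Lemma~\ref{lem:korn2} together with $a^{(k)}(\cdot,\cdot)\ge 2\mu\|\epsilon(\cdot)\|_{L^2}^2$. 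The only (harmless) difference is that you apply Lemma~\ref{lem:korn2} directly to the Schur-complement extension $U^{(\ell)}$, while the paper introduces the auxiliary $\epsilon$-energy-minimizing extension $\mathcal H_\epsilon^{(k)}$ and commutes it with $\Pi_F^{(k)}$ before invoking that lemma; both variants yield the identical bound.
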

\begin{proof}
	Let $\mathbf w,\mathbf v\in \widetilde{\mathbf W}$ with coefficient representations
	$\underline{\mathbf w}$ and $\underline{\mathbf v}$ be such that
	$\underline{\mathbf w} := \scaling^{-1} B^\top B \underline{\mathbf v}$.
	Since the Schur-complement is the energy-minimizing extension and using the boundedness of $A$, we have $\| \scaling^{-1} B^\top B \underline{\mathbf v} \|_{S_A}^2=\| \underline{\mathbf w} \|_{S_A}^2$ with
	\begin{align*}
			\| \underline{\mathbf w} \|_{S_A}^2
			& =
			\sum_{k=1}^K
			\inf_{
			\begin{array}{c}
			\mbox{\scriptsize $\kbox{\mathbf u} \in \kbox{\mathbf V}$}\\
			\mbox{\scriptsize$\left.\kbox{\mathbf u}\right|_{\partial\kbox{\Omega}}= \kbox{\mathbf w}$}
			 \end{array} }
			2\mu \|\epsilon(\kbox{\mathbf u})\|_{L^2(\kbox{\Omega})}^2
			+ \lambda \|\divergence\kbox{\mathbf u}\|_{L^2(\kbox{\Omega})}^2 \\
			&\le
			2(\mu + \lambda)
			\sum_{k=1}^K
				\inf_{
				\begin{array}{c}
				\mbox{\scriptsize $\kbox{\mathbf u} \in \kbox{\mathbf V}$}\\
				\mbox{\scriptsize$\left.\kbox{\mathbf u}\right|_{\partial\kbox{\Omega}}= \kbox{\mathbf w}$}
				 \end{array} }
				|\kbox{\mathbf u}|_{H^1(\kbox{\Omega})}^2.
	\end{align*}
	This term is the $H^1$-energy of the discrete harmonic extension
	$\mathcal H^{(k)}: \mathbf W^{(k)} \rightarrow \mathbf V^{(k)}$,
	which by definition minimizes
	$
		|\mathcal H^{(k)} \mathbf w^{(k)}|_{H^1(\Omega^{(k)})}
	$
	under the constraint
	$
		\left(\mathcal H^{(k)} \mathbf w^{(k)}\right)\Big|_{\partial\Omega^{(k)}}=\mathbf w^{(k)}
	$.
	This means that we have
	\[
			\| \underline{\mathbf w} \|_{S_A}^2
			\le
			2(\mu + \lambda)
			\sum_{k=1}^K
			| \mathcal H^{(k)} \mathbf w^{(k)} |_{H^1(\Omega^{(k)})}^2.
	\]
	Using~\cite[Theorem~4.2 
	and Lemma~4.15]{SchneckenleitnerTakacs:2020},  
	we further have
	\begin{align*}
			\| \underline{\mathbf w} \|_{S_A}^2
			&\lesssim
			2(\mu + \lambda) \degree
			\sum_{k=1}^K
			| \mathbf w^{(k)} |_{H^{1/2}(\partial\Omega^{(k)})}^2
			\\&
			\lesssim
			2(\mu + \lambda) \degree
			\sum_{k=1}^K
			\sum_{\ell\in\mathcal N_\Gamma(k)}
			\left(
			| \mathbf w^{(k)} |_{H^{1/2}(\Gamma^{(k,\ell)})}^2
			+
			\Lambda
			| \mathbf w^{(k)} |_{L_0^\infty(\Gamma^{(k,\ell)})}^2
			\right),
	\end{align*}
	where $\Lambda:=\left(1+\log \degree+\max_{k=1,\ldots,K} \log\frac{H_k}{h_k}\right)$.
	Since $\mathbf v^{(k)}$ is continuous at the corners, standard arguments yield
	\[
		\mathbf w^{(k)}|_{\Gamma^{(k,\ell)}}
		=
		\mathbf v^{(k)}|_{\Gamma^{(k,\ell)}}
		-
		\mathbf v^{(\ell)}|_{\Gamma^{(k,\ell)}},
		\]
	see, e.g.,~\cite[Proof of Lemma~4.16]{SchneckenleitnerTakacs:2020}
	for a proof. Using this statement, we further have
	\[
			\| \underline{\mathbf w} \|_{S_A}^2
			\lesssim
			2(\mu + \lambda) \degree \Lambda
			\sum_{k=1}^K
			\sum_{\ell\in\mathcal N_\Gamma(k)}
			\left(
			| \mathbf v^{(k)} |_{H^{1/2}(\Gamma^{(k,\ell)})}^2
			+
			| \mathbf v^{(k)} |_{L_0^\infty(\Gamma^{(k,\ell)})}^2
			\right).
	\]
	We substitute $\kbox{\mathbf v_0 }:=
	(I-\Pi_F^{(k)}) \kbox{\mathbf v}$ for $\kbox{\mathbf v}$.
	Since $\Pi_F^{(k)} \mathbf v^{(k)}|_{\Gamma^{(k,\ell)}}
	= \Pi_F^{(\ell)} \mathbf v^{(\ell)}|_{\Gamma^{(k,\ell)}}$,
	we know that $\mathbf v-\mathbf v_0$ vanishes on all interfaces. Thus,
	$B \underline{\mathbf  v}_0 = B \underline{\mathbf v} = 0$ and consequently
	\[
			\| \underline{\mathbf w} \|_{S_A}^2
			\lesssim
			\tilde{\Lambda}
			\sum_{k=1}^K
			\sum_{\ell\in\mathcal N_\Gamma(k)}
			\left(
			| (I-\Pi_F^{(k)}) \mathbf v^{(k)} |_{H^{1/2}(\Gamma^{(k,\ell)})}^2
			+
			| (I-\Pi_F^{(k)}) \mathbf v^{(k)} |_{L_0^\infty(\Gamma^{(k,\ell)})}^2
			\right),
	\]
	where $\tilde{\Lambda}:=2(\mu + \lambda) \degree \Lambda$.
	Using \cite[Theorem~4.2]{SchneckenleitnerTakacs:2020} and the fact that the
	$\mathcal H^{(k)}$ is the extension operator that minimizes the $H^1$-seminorm,
	we obtain
	\begin{align*}
			\| \underline{\mathbf w} \|_{S_A}^2
			&\lesssim
			\tilde{\Lambda}
			\sum_{k=1}^K
			\left(
			| \mathcal H^{(k)} (I-\Pi_F^{(k)}) \mathbf v^{(k)} |_{H^{1}(\Omega^{(k)})}^2
			+
			\sum_{\ell\in\mathcal N_\Gamma(k)}
			| (I-\Pi_F^{(k)}) \mathbf v^{(k)} |_{L_0^\infty(\Gamma^{(k,\ell)})}^2
			\right)
			\\
			&\le
			\tilde{\Lambda}
			\sum_{k=1}^K
			\left(
			| \mathcal H_{\epsilon}^{(k)} (I-\Pi_F^{(k)}) \mathbf v^{(k)} |_{H^{1}(\Omega^{(k)})}^2
			+
			\sum_{\ell\in\mathcal N_\Gamma(k)}
			| (I-\Pi_F^{(k)}) \mathbf v^{(k)} |_{L_0^\infty(\Gamma^{(k,\ell)})}^2
			\right)
			,
	\end{align*}
	where $\mathcal H_\epsilon^{(k)}: \mathbf W^{(k)} \rightarrow \mathbf V^{(k)}$,
	which minimizes the energy
	$
			\|\epsilon(\mathcal H_\epsilon^{(k)} \mathbf w^{(k)})\|_{L^2(\Omega^{(k)})}
	$
	under the constraint
	$
		\left(\mathcal H^{(k)}_\epsilon \mathbf w^{(k)}\right)\Big|_{\partial\Omega^{(k)}}=\mathbf w^{(k)}
	$.
	We use the fact that
	$\mathcal H^{(k)}_\epsilon$ does not change the values at the interfaces
	and $\Pi_F^{(k)} \mathcal H^{(k)}_\epsilon =\Pi_F^{(k)}$ to obtain
	\begin{align*}
			\| \underline{\mathbf w} \|_{S_A}^2
			&\lesssim
			\tilde{\Lambda}
			\sum_{k=1}^K
			\left(
			| (\mathcal H_\epsilon^{(k)}-\Pi_F^{(k)}) \mathbf v^{(k)} |_{H^{1}(\Omega^{(k)})}^2
			+
			| (\mathcal H_\epsilon^{(k)}-\Pi_F^{(k)}) \mathbf v^{(k)} |_{L_0^\infty(\Omega^{(k)})}^2
			\right)
			\\
			&\lesssim
			\tilde{\Lambda}
			\sum_{k=1}^K
			\left(
			| (I-\Pi_F^{(k)}) \mathcal H_\epsilon^{(k)}  \mathbf v^{(k)} |_{H^{1}(\Omega^{(k)})}^2
			+
			| (I-\Pi_F^{(k)}) \mathcal H_\epsilon^{(k)}  \mathbf v^{(k)} |_{L_0^\infty(\Omega^{(k)})}^2
			\right).
	\end{align*}
	Since $(I-\Pi_F^{(k)}) \mathbf v^{(k)}$ vanishes at the corners,
	Lemma~\ref{lem:korn2} gives
	\begin{align*}
		&\| \underline{\mathbf w} \|_{S_A}^2 = \| \scaling^{-1} B^\top B \underline{\mathbf v} \|_{S_A}^2
		\\&	\lesssim
			2(\mu + \lambda)\degree \, \Lambda^2 \,\sum_{k=1}^K\alpha_k^{-2}
			\left(
			\| \epsilon( \mathcal H_\epsilon^{(k)}  \mathbf v^{(k)}) \|_{L^2(\Omega^{(k)})}^2
			+
			\| \epsilon( \mathcal H_\epsilon^{(k)} \mathbf v^{(k)}) \|_{L^2(\Omega^{(k)})}^2
			\right)
			\\ & \lesssim
			2(\mu + \lambda)\degree \, \Lambda^2 \, \left(\max_{k=1,\ldots,K}\alpha_k^{-2}\right)
			\sum_{k=1}^K
			\| \epsilon( \mathcal H_\epsilon^{(k)}  \mathbf v^{(k)}) \|_{L^2(\Omega^{(k)})}^2
			\\& \le
			 \frac{\mu + \lambda}{\mu} \degree \, \Lambda^2 \left(\max_{k=1,\ldots,K}\alpha_k^{-2}\right)\, 	\| \underline{\mathbf v} \|_{S_A}^2
			,
	\end{align*}
	which finishes the proof.
\end{proof}

The primal elasticity formulation considered in this paper is an elliptic problem, which directly fits into the framework of ~\cite{MandelDohrmannTezaur:2005a}. Altough, a scalar valued problem was considered there, the main results carry over directly to vector-valued problems. So, we can directly apply~\cite[Theorem~22]{MandelDohrmannTezaur:2005a} to obtain the following  condition number bound.

\begin{theorem}\label{thrm:fin}
	Provided that the IETI-DP solver is set up as outlined in the previous
	section, the condition number of the preconditioned system satisfies
	\[
		\kappaess(M_{\mathrm{sD}} F) \le c\,\frac{\mu+\lambda}{\mu}
		\,
		 \degree \left(1+\log \degree+\max_{k=1,\ldots,K} \log\frac{H_k}{h_k}\right)^2\,
		 \left(\max_{k=1,\ldots,K}\alpha_k^{-2}\right)\,
		 ,
	\]
	where $c$ is a constant that only depends on the constants from the	assumptions~\eqref{eq:ass:neighbor}, \eqref{eq:ass:nabla} and~\eqref{eq:ass:quasiuniform} (cf. Notation~\ref{notation}), $\alpha_k$ are the patch-local Korn constants from~\eqref{eq:localkorn0} and $\kappaess$ denotes the essential condition number, which is the ratio between the largest eigenvalue and the smallest non-zero eigenvalue.
\end{theorem}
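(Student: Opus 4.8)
The plan is to combine the abstract algebraic condition number estimate from the FETI-DP framework of \cite{MandelDohrmannTezaur:2005a} with the spectral bound already proven in Lemma~\ref{lem:BdtB:A}. The argument is short precisely because the heavy lifting is done elsewhere: \cite[Theorem~22]{MandelDohrmannTezaur:2005a} reduces the essential condition number of the preconditioned operator to a single worst-case ratio of energy norms, and Lemma~\ref{lem:BdtB:A} bounds exactly that ratio.

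First I would recall the abstract estimate in the specialized form
\[
\kappaess(M_{\mathrm{sD}} F)
\le
\sup_{\mathbf v \in \widetilde{\mathbf W}}
\frac{\| \scaling^{-1} B^\top B \underline{\mathbf v} \|_{S_A}^2}
{\| \underline{\mathbf v} \|_{S_A}^2},
\]
which is what \cite[Theorem~22]{MandelDohrmannTezaur:2005a} yields for the scaled Dirichlet preconditioner built from the block matrices $B$, $\scaling$ and $S_A$ assembled in Section~\ref{sec:IETISolver}; the supremum runs over all coefficient vectors $\underline{\mathbf v}$ representing functions in the corner-continuous space $\widetilde{\mathbf W}$, and the matching lower eigenvalue bound that turns this into a control of $\kappaess$ is supplied by the projection structure of $\scaling^{-1} B^\top B$ in the $S_A$-inner product. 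The second step is then immediate: Lemma~\ref{lem:BdtB:A} bounds the numerator by the stated factor times $\| \underline{\mathbf v} \|_{S_A}^2$ uniformly in $\mathbf v \in \widetilde{\mathbf W}$, so taking the supremum produces precisely the right-hand side of the theorem, with the generic constant $c$ absorbing the constants from Notation~\ref{notation}.

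The only step that is not a verbatim citation is checking that the abstract theory of \cite{MandelDohrmannTezaur:2005a}, originally stated for a scalar second-order problem, transfers to the vector-valued elasticity system. The main obstacle is therefore organizational rather than technical: one must confirm that every hypothesis of the abstract theorem is formulated purely in terms of the local Schur complements $S_A$, the jump matrices $B$, and the multiplicity scaling $\scaling$, with no use of scalar-specific structure. This holds because the framework treats the substructures abstractly through their Schur complements and a partition-of-unity scaling; the entire problem-dependent content is the spectral equivalence in Lemma~\ref{lem:BdtB:A}, whose proof already incorporates the vector nature of the problem through Korn's inequality on each patch~\eqref{eq:localkorn0}. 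Once this is confirmed — in particular that the choice of corner values as primal degrees of freedom renders the local saddle-point systems~\eqref{eq:localElastAverage} nonsingular, and that the discrete harmonic extensions entering $S_A$ retain their energy-minimization property for the vector-valued $H^1$-seminorm — the bound follows by direct substitution into the abstract estimate.
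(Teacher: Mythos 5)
Your proposal is correct and takes essentially the same route as the paper: the paper's proof of Theorem~\ref{thrm:fin} consists precisely of invoking \cite[Theorem~22]{MandelDohrmannTezaur:2005a} to reduce $\kappaess(M_{\mathrm{sD}}F)$ to the ratio~\eqref{eq:ratio} over $\widetilde{\mathbf W}$ and then applying Lemma~\ref{lem:BdtB:A}, together with the remark that the scalar-valued framework carries over directly to the vector-valued elasticity system. Your additional verification points (the purely algebraic formulation of the abstract hypotheses in terms of $S_A$, $B$, $\scaling$, and the nonsingularity of the local systems via the corner primal degrees of freedom) are exactly the considerations the paper disposes of in the text preceding the theorem.
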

\begin{proof}
	\cite[Theorem~22]{MandelDohrmannTezaur:2005a} states that the
	bound follows directly from Lemma~\ref{lem:BdtB:A}.
\end{proof}

If standard Krylov space methods are applied to the singular matrix $F$,
preconditioned with a non-singular preconditioner $M_{\mathrm{sD}}$,
all iterations live in the corresponding factor space. The convergence
behavior is dictated by the essential condition number of
$M_{\mathrm{sD}}F$, cf.~\cite[Remark 23]{MandelDohrmannTezaur:2005a}.

\section{Numerical results}
\label{sec:num}

In this section, we present numerical results that illustrate the efficiency of the proposed IETI-DP solver and the address the issues related to geometry locking. We consider two domains, a tortoise (Subsection~\ref{subsec:GA}) and a wrench (Subsection~\ref{subsec:wrench}). The first domain illustrates that the IETI-DP solver works also well for domains with non-trivial parmeterizations $\mathbf G_k$ and that contain extraordinary vertices (interior vertices touching less than or more than four patches). We consider the wrench domain as a non-trivial example of a long and thin domain, susceptible for geometry locking.

\subsection{Results for the tortoise domain}
\label{subsec:GA}

We consider a tortoise domain, composed of 60 patches, see Figure~\ref{fig:domainGA}. Each of the patches is parameterized using B-splines.
We consider the elasticity problem with Lamé parameters $\mu=\lambda=1$ and without external volumetric forces ($\mathbf f=0$). The boundary conditions are chosen as follows. On the bottom of the feet (depicted in blue color in Figure~\ref{fig:domainGA}), we have homogeneous Dirichlet boundary conditions ($\mathbf g_D=0$). On the top of the tortoise (depicted in red color in Figure~\ref{fig:domainGA}), we assume inhomogeneous Neumann boundary conditions with $\mathbf{g}_N = (0,-g_0)^\top$, with $g_0 = 0.00001$. On the remaining boundary, we set homogeneous Neumann boundary conditions.

\begin{figure}[htb]
	\centering
	\includegraphics[height=14em]{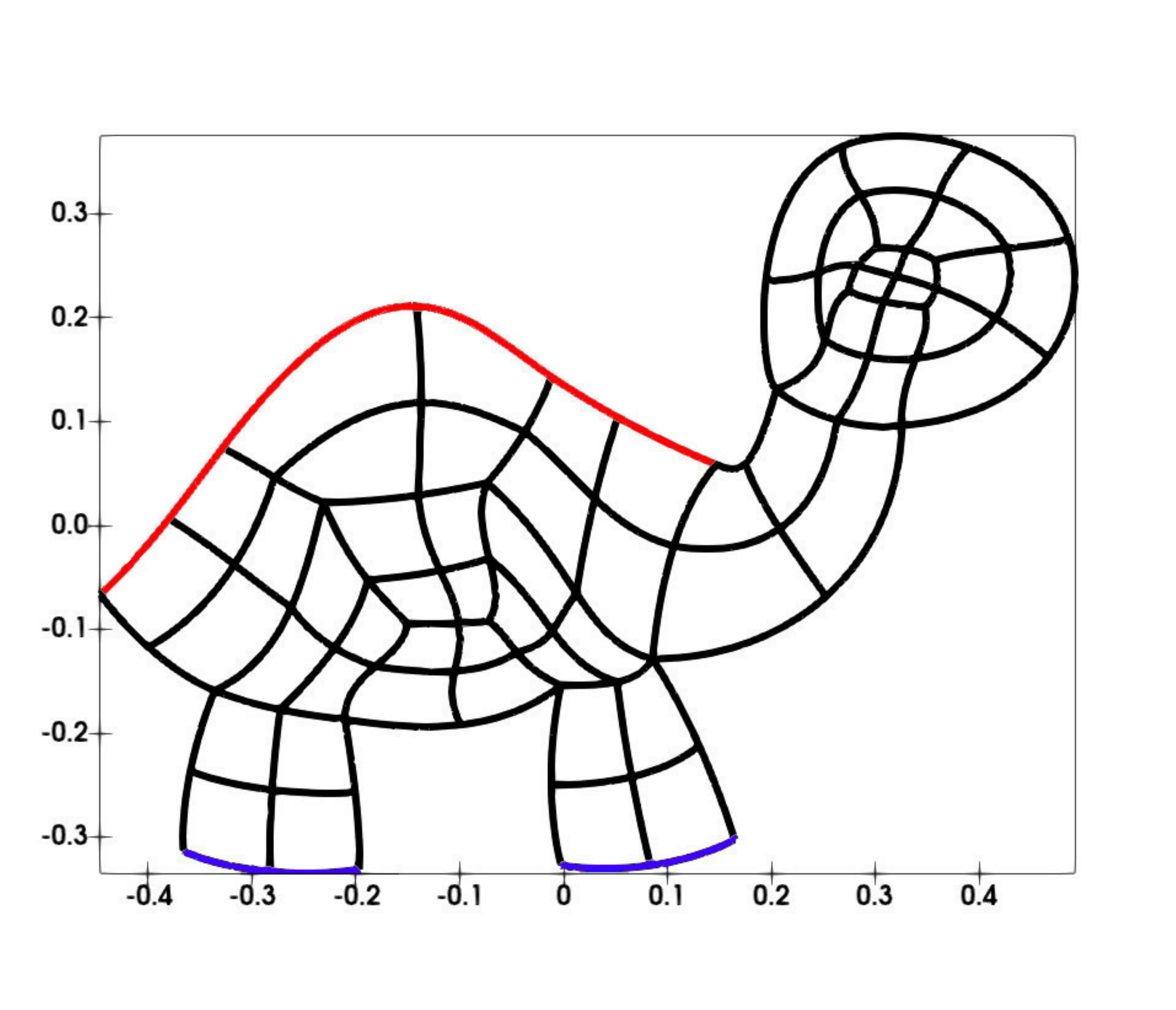}
	\caption{A tortoise}
	\label{fig:domainGA}
\end{figure}

The discretization is done as outlined in Section~\ref{sec:modelProblem}.
On each patch, we obtain the discretization space by performing $\ell = 1, 2,\ldots$ uniform refinement steps, staring from patchwise global polynomials, i.e., a grid without inner knots. For solving the overall problem, we use the IETI-DP method as proposed in Section~\ref{sec:IETISolver}. The reduced system~\eqref{eq:lambdaSys} is solved using a conjugate gradient solver. We use a random initial guess. The overall solver is stopped when the Euclidean norm of the residual is reduced by a factor of $\varepsilon = 10^{-6}$, compared to the Euclidean norm of the initial residual. The condition numbers are estimated using the underlying Lanczos iteration of the conjugate gradient method. The local linear systems are solved using a sparse LU-solver. All experiments have been implemented using the G+Smo library\footnote{\url{https://github.com/gismo/gismo}}.

\begin{table}[th]
\scriptsize
	\newcolumntype{L}[1]{>{\raggedleft\arraybackslash\hspace{-1em}}m{#1}}
	\centering
	\renewcommand{\arraystretch}{1.25}
	\begin{tabular}{l|L{1em}L{1.8em}|L{1em}L{1.8em}|L{1em}L{1.8em}|L{1em}L{1.8em}|L{1em}L{1.8em}}
		\toprule
		& \multicolumn{2}{c|}{$\degree=2$}
		& \multicolumn{2}{c|}{$\degree=3$}
		& \multicolumn{2}{c|}{$\degree=4$}
		& \multicolumn{2}{c|}{$\degree=5$}
		& \multicolumn{2}{c}{$\degree=6$} \\
		$\ell$
		& it & $\kappa$
		& it & $\kappa$
		& it & $\kappa$
		& it & $\kappa$
		& it & $\kappa$ \\
		\midrule
        $2$  & $20$ & $12.2$ & $23$ & $16.6$ & $25$ & $20.0$ & $26$ & $22.6$ & $26$ & $25.2$\\
        $3$  & $24$ & $17.2$ & $27$ & $22.2$ & $29$ & $26.1$ & $30$ & $29.3$ & $31$ & $32.4$\\
        $4$  & $28$ & $22.7$ & $31$ & $28.6$ & $33$ & $33.3$ & $35$ & $37.6$ & $35$ & $42.0$\\
        $5$  & $31$ & $29.2$ & $34$ & $36.6$ & $37$ & $42.8$ & $39$ & $48.8$ & $39$ & $53.6$\\
		\bottomrule
	\end{tabular}
   	\captionof{table}{Iteration counts (it) and condition numbers ($\kappa$); tortoise; $\mu=1, \lambda=1$.
	  \label{tab:GA-A}}
\end{table}
\begin{table}[th]
\scriptsize
	\newcolumntype{L}[1]{>{\raggedleft\arraybackslash\hspace{-1em}}m{#1}}
	\centering
	\renewcommand{\arraystretch}{1.25}
	\begin{tabular}{l|L{1em}L{1.8em}|L{1em}L{1.8em}|L{1em}L{1.8em}|L{1em}L{1.8em}|L{1em}L{1.8em}}
		\toprule
		& \multicolumn{2}{c|}{$\degree=2$}
		& \multicolumn{2}{c|}{$\degree=3$}
		& \multicolumn{2}{c|}{$\degree=4$}
		& \multicolumn{2}{c|}{$\degree=5$}
		& \multicolumn{2}{c}{$\degree=6$} \\
		$\ell$
		& it & $\kappa$
		& it & $\kappa$
		& it & $\kappa$
		& it & $\kappa$
		& it & $\kappa$ \\
		\midrule
        $2$  & $13$ & $ 4.3$ & $16$ & $ 5.9$ & $18$ & $ 7.6$ & $18$ & $ 9.4$ & $19$ & $11.2$\\
        $3$  & $17$ & $ 6.2$ & $20$ & $ 8.7$ & $21$ & $11.4$ & $22$ & $14.0$ & $22$ & $16.2$\\
        $4$  & $20$ & $ 8.7$ & $23$ & $12.7$ & $24$ & $16.4$ & $25$ & $19.5$ & $26$ & $22.2$\\
        $5$  & $23$ & $12.4$ & $26$ & $17.9$ & $27$ & $22.4$ & $29$ & $26.1$ & $29$ & $29.2$\\
        \bottomrule
	\end{tabular}
    \captionof{table}{Iteration counts (it) and condition numbers ($\kappa$); tortoise; $\mu=1, \lambda=1$; including edge averages of normal component of displacement \eqref{eq:normals} as primal degrees of freedom.
	  \label{tab:GA-normal}}
\end{table}
\begin{table}[tb]
\scriptsize
	\newcolumntype{L}[1]{>{\raggedleft\arraybackslash\hspace{-1em}}m{#1}}
	\centering
	\renewcommand{\arraystretch}{1.25}
	\begin{tabular}{l|L{1em}L{1.8em}|L{1em}L{1.8em}|L{1em}L{1.8em}|L{1em}L{1.8em}|L{1em}L{1.8em}}
		\toprule
		& \multicolumn{2}{c|}{$\degree=2$}
		& \multicolumn{2}{c|}{$\degree=3$}
		& \multicolumn{2}{c|}{$\degree=4$}
		& \multicolumn{2}{c|}{$\degree=5$}
		& \multicolumn{2}{c}{$\degree=6$} \\
		$\ell$
		& it & $\kappa$
		& it & $\kappa$
		& it & $\kappa$
		& it & $\kappa$
		& it & $\kappa$ \\
		\midrule
        $2$  & $11$ & $ 2.8$ & $13$ & $ 4.5$ & $15$ & $ 5.9$ & $16$ & $ 7.8$ & $16$ & $ 9.5$\\
        $3$  & $14$ & $ 4.7$ & $16$ & $ 6.8$ & $18$ & $ 9.7$ & $18$ & $12.2$ & $18$ & $14.3$\\
        $4$  & $17$ & $ 6.6$ & $19$ & $10.8$ & $20$ & $14.4$ & $21$ & $17.4$ & $21$ & $19.9$\\
        $5$  & $19$ & $10.2$ & $21$ & $15.7$ & $24$ & $20.1$ & $24$ & $23.6$ & $24$ & $26.5$\\
        \bottomrule
	\end{tabular}
    \captionof{table}{Iteration counts (it) and condition numbers ($\kappa$); tortoise; $\mu=1, \lambda=1$; including edge averages of both displacement components \eqref{eq:edge} as primal degrees of freedom.
	  \label{tab:GA-edge}}
\end{table}

For this setting, we obtain a moderate global Korn constant $\alpha_{\Omega,\Gamma_D}^{-2} \approx 211$; here and in what follows, we have estimated the constant for $\degree=3$, $\ell=2$ using a singular value decomposition.

The number of iterations needed to reach the threshold and the estimated condition number are shown in Table~\ref{tab:GA-A}. As proposed, we only considered the corner values as primal degrees of freedom. From the tables we see that the iteration count and condition number increase slightly as $\ell$ and $\degree$ increases. This is consistent with the convergence estimate in Theorem~\ref{thrm:fin}.

In Table~\ref{tab:GA-normal}, we present the results, where we enrich the primal degrees of freedom.  We follow the approach presented in~\cite{SognTakacs:2022} for the Stokes equations. Here, besides the corner values, we also use the averages of the normal components as primal degrees of freedom, i.e., we require
\begin{equation}\label{eq:normals}
		\int_{\Gamma^{(k,\ell)}} \mathbf u^{(k)} \cdot \mathbf n^{(k)} \,\mathrm d s
		+
		\int_{\Gamma^{(k,\ell)}} \mathbf u^{(\ell)} \cdot \mathbf n^{(\ell)} \,\mathrm d s
		=0
\end{equation}
for any two patches $\Omega^{(k)}$ and $\Omega^{(\ell)}$ that share a common edge $\Gamma^{(k,\ell)}:= \partial\Omega^{(k)}\cap\partial\Omega^{(\ell)}$. We observe that this choice significantly reduces the iteration count and condition number.

In Table~\ref{tab:GA-edge}, we present the results, where we enrich the primal degrees of freedom using standard edge averages
\begin{equation}\label{eq:edge}
		\int_{\Gamma^{(k,\ell)}} \mathbf u^{(k)}  \,\mathrm d s
		=
		\int_{\Gamma^{(k,\ell)}} \mathbf u^{(\ell)}  \,\mathrm d s,
\end{equation}
which we use besides the corner values. This option yields slightly better results than the results with normal components only, however this approach leads to an increase of the number of primal degrees of freedom by a factor of around $\tfrac32$, compared to the version presented in Table~\ref{tab:GA-normal}.

\subsection{Results for a wrench}
\label{subsec:wrench}

In this Subsection, we demonstrate that the convergence of the proposed method does not suffer in cases where the constant from Korn's inequality is small. As computational domain, we choose a wrench with a long and relatively thin handle, see Figure~\ref{fig:domainW}. The patch is compost of $64$ patches.
\begin{figure}[htb]
	\centering
	\includegraphics[height=3cm]{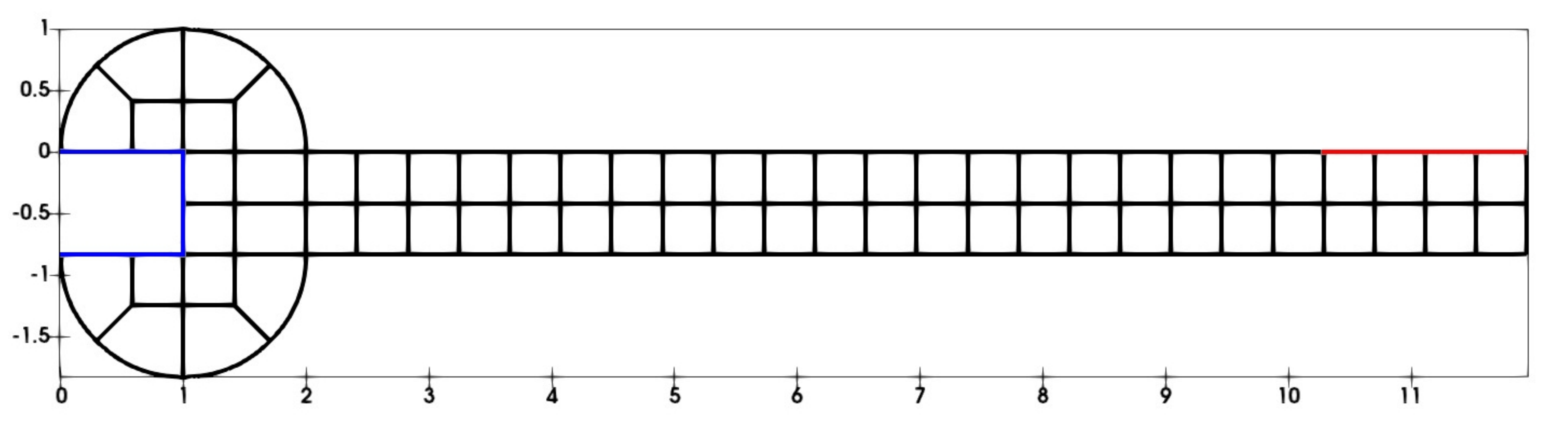}
	\caption{A wrench}
	\label{fig:domainW}
\end{figure}
Again, we consider the elasticity problem with Lamé parameters $\mu=\lambda=1$ and with $\mu=1,\lambda=100$ and without external volumetric forces ($\mathbf f=0$). We choose a homogeneous Dirichlet boundary condition on the sides where the wrench would touch the bolt, this is marked with blue color in Figure~\ref{fig:domainW}. The remaining boundary has Neumann boundary conditions. For the parts that are marked with red color, we impose a downward force, represented as $\mathbf{g}_N = (0,-g_0)$, with $g_0 = 0.00001$. For the remaining boundary, we have homogeneous Neumann boundary conditions.

The discretization and solution strategy is the same as for the tortoise (Subsection~\ref{subsec:GA}). Compared to the tortoise domain, the estimated value of the constant for Korn's inequality is much smaller, since we now get $\alpha_{\Omega,\Gamma_D}^{-2}\approx 1500$.

The iteration count and the estimated condition number are shown in Table~\ref{tab:wrench} and \ref{tab:wrenchNormal}.
For Table~\ref{tab:wrenchNormal}, 'normal' dofs where also used as primals.
The global Korn constant is much smaller for the wrench, however, the iteration counts and condition numbers are better than the tortoise. This indicates, as predicted from the theory, that the global Korn constant does not effect the condition number of the IETI-DP solver.  The reason the tortoise has larger iteration counts and condition numbers, is because the individual geometry mappings are more distorted than for the wrench.

\begin{table}[b]
\scriptsize
	\newcolumntype{L}[1]{>{\raggedleft\arraybackslash\hspace{-1em}}m{#1}}
	\centering
	\renewcommand{\arraystretch}{1.25}
	\begin{tabular}{l|L{1em}L{1.8em}|L{1em}L{1.8em}|L{1em}L{1.8em}|L{1em}L{1.8em}|L{1em}L{1.8em}|L{1em}L{1.8em}}
		\toprule
		& \multicolumn{2}{c|}{$\degree=1$}
		& \multicolumn{2}{c|}{$\degree=2$}
		& \multicolumn{2}{c|}{$\degree=3$}
		& \multicolumn{2}{c|}{$\degree=4$}
		& \multicolumn{2}{c|}{$\degree=5$}
		& \multicolumn{2}{c}{$\degree=6$} \\
		$\ell$
		& it & $\kappa$
		& it & $\kappa$
		& it & $\kappa$
		& it & $\kappa$
		& it & $\kappa$ \\
		\midrule
        $2$  & $13$ & $ 3.8$ & $17$ & $ 6.0$ & $19$ & $ 7.4$ & $20$ & $ 8.4$ & $20$ & $ 9.4$ & $20$ & $10.0$\\
        $3$  & $16$ & $ 5.4$ & $19$ & $ 7.9$ & $21$ & $ 9.4$ & $22$ & $10.6$ & $23$ & $11.5$ & $23$ & $12.4$\\
 		$4$  & $18$ & $ 7.2$ & $22$ & $10.0$ & $24$ & $11.7$ & $25$ & $13.1$ & $26$ & $14.3$ & $26$ & $15.3$\\
        $5$  & $21$ & $ 9.2$ & $24$ & $12.5$ & $26$ & $14.5$ & $28$ & $16.1$ & $29$ & $17.4$ & $29$ & $18.6$\\
		\bottomrule
	\end{tabular}
    \captionof{table}{Iteration count (it) and condition numbers ($\kappa$); wrench; $\mu=1, \lambda=1$.
	  \label{tab:wrench}}
\end{table}
\begin{table}[thb]
\scriptsize
	\newcolumntype{L}[1]{>{\raggedleft\arraybackslash\hspace{-1em}}m{#1}}
	\centering
	\renewcommand{\arraystretch}{1.25}
	\begin{tabular}{l|L{1em}L{1.8em}|L{1em}L{1.8em}|L{1em}L{1.8em}|L{1em}L{1.8em}|L{1em}L{1.8em}|L{1em}L{1.8em}}
		\toprule
		& \multicolumn{2}{c|}{$\degree=1$}
		& \multicolumn{2}{c|}{$\degree=2$}
		& \multicolumn{2}{c|}{$\degree=3$}
		& \multicolumn{2}{c|}{$\degree=4$}
		& \multicolumn{2}{c|}{$\degree=5$}
		& \multicolumn{2}{c}{$\degree=6$} \\
		$\ell$
		& it & $\kappa$
		& it & $\kappa$
		& it & $\kappa$
		& it & $\kappa$
		& it & $\kappa$ \\
		\midrule
        $2$  & $ 8$ & $ 2.0$ & $11$ & $ 3.0$ & $13$ & $ 3.8$ & $14$ & $ 4.6$ & $14$ & $ 5.1$ & $15$ & $ 5.7$ \\
        $3$  & $11$ & $ 2.8$ & $14$ & $ 4.2$ & $15$ & $ 5.2$ & $16$ & $ 6.1$ & $17$ & $ 6.7$ & $17$ & $ 7.4$ \\
 		$4$  & $13$ & $ 3.8$ & $16$ & $ 5.6$ & $18$ & $ 6.9$ & $19$ & $ 7.8$ & $20$ & $ 8.6$ & $20$ & $ 9.3$ \\
        $5$  & $15$ & $ 5.2$ & $18$ & $ 7.3$ & $20$ & $ 8.8$ & $22$ & $ 9.9$ & $22$ & $10.8$ & $22$ & $11.5$ \\
		\bottomrule
	\end{tabular}
    \captionof{table}{Iteration count (it) and condition numbers ($\kappa$); wrench; $\mu=1, \lambda=1$; including edge averages of normal component of displacement \eqref{eq:normals} as primal degrees of freedom.
	  \label{tab:wrenchNormal}}
\end{table}

\begin{table}[htp]
\scriptsize
	\newcolumntype{L}[1]{>{\raggedleft\arraybackslash\hspace{-1em}}m{#1}}
	\centering
	\renewcommand{\arraystretch}{1.25}
	\begin{tabular}{l|L{1em}L{1.8em}|L{1em}L{1.8em}|L{1em}L{1.8em}|L{1em}L{1.8em}|L{1em}L{1.8em}|L{1em}L{1.8em}}
		\toprule
		& \multicolumn{2}{c|}{$\degree=1$}
		& \multicolumn{2}{c|}{$\degree=2$}
		& \multicolumn{2}{c|}{$\degree=3$}
		& \multicolumn{2}{c|}{$\degree=4$}
		& \multicolumn{2}{c|}{$\degree=5$}
		& \multicolumn{2}{c}{$\degree=6$} \\
		$\ell$
		& it & $\kappa$
		& it & $\kappa$
		& it & $\kappa$
		& it & $\kappa$
		& it & $\kappa$ \\
		\midrule
        $2$  & $19$ & $20.5$ & $28$ & $50.6$  & $34$ & $67.7$  & $37$ & $78.4$  & $37$ & $87.2$  & $37$ & $95.1$\\
        $3$  & $22$ & $33.5$ & $31$ & $66.8$  & $36$ & $84.0$  & $39$ & $95.7$  & $41$ & $104.8$ & $41$ & $110.6$\\
 		$4$  & $26$ & $48.7$ & $34$ & $83.9$  & $39$ & $101.0$ & $42$ & $111.8$ & $43$ & $119.8$ & $45$ & $129.2$\\
        $5$  & $30$ & $64.5$ & $37$ & $100.8$ & $42$ & $119.1$ & $45$ & $130.6$ & $46$ & $141.8$ & $47$ & $147.7$\\
		\bottomrule
	\end{tabular}
    \captionof{table}{Iteration count (it) and condition numbers ($\kappa$); wrench; $\mu=1, \lambda=100$.
	  \label{tab:wrench100}}
\end{table}
\begin{table}[htp]
\scriptsize
	\newcolumntype{L}[1]{>{\raggedleft\arraybackslash\hspace{-1em}}m{#1}}
	\centering
	\renewcommand{\arraystretch}{1.25}
	\begin{tabular}{l|L{1em}L{1.8em}|L{1em}L{1.8em}|L{1em}L{1.8em}|L{1em}L{1.8em}|L{1em}L{1.8em}|L{1em}L{1.8em}}
		\toprule
		& \multicolumn{2}{c|}{$\degree=1$}
		& \multicolumn{2}{c|}{$\degree=2$}
		& \multicolumn{2}{c|}{$\degree=3$}
		& \multicolumn{2}{c|}{$\degree=4$}
		& \multicolumn{2}{c|}{$\degree=5$}
		& \multicolumn{2}{c}{$\degree=6$} \\
		$\ell$
		& it & $\kappa$
		& it & $\kappa$
		& it & $\kappa$
		& it & $\kappa$
		& it & $\kappa$ \\
		\midrule
        $2$  & $10$ & $2.9$ & $16$ & $ 6.8$ & $20$ & $10.6$ & $22$ & $14.0$ & $23$ & $16.3$ & $23$ & $17.9$\\
        $3$  & $12$ & $3.6$ & $17$ & $ 8.1$ & $21$ & $12.3$ & $23$ & $15.3$ & $24$ & $17.6$ & $24$ & $19.4$\\
 		$4$  & $13$ & $4.4$ & $19$ & $ 9.1$ & $22$ & $13.4$ & $24$ & $16.5$ & $25$ & $18.7$ & $26$ & $20.4$\\
        $5$  & $14$ & $5.0$ & $20$ & $10.0$ & $23$ & $14.2$ & $25$ & $17.3$ & $26$ & $19.6$ & $27$ & $21.2$\\
		\bottomrule
	\end{tabular}
    \captionof{table}{Iteration count (it) and condition numbers ($\kappa$); wrench; $\mu=1, \lambda=100$; including edge averages of normal component of displacement \eqref{eq:normals} as primal degrees of freedom.
	  \label{tab:wrenchNormal100}}
\end{table}

Next, we demonstrate that the IETI-DP solver also works well if $\lambda/\mu$ is increased, which corresponds to the case that the material is less compressible. The results are presented in Tables~\ref{tab:wrench100} and \ref{tab:wrenchNormal100}. We observe that the iteration counts only mildly grow compared to the case $\lambda=\mu=1$. The condition numbers seem to be much more susceptible to this change. All effects are reduced if, additionally to the corner values, also the normal components of the edge averages are chosen as primal degrees of freedom.

In Table~\ref{tab:wrenchsize}, we consider the dependence of the performance of the IETI-DP solver on the length of the wrench. The wrench depicted in Figure~\ref{fig:domainW} has handle with a length of $10$ (with $x$-coordinate between $2$ and $12$). The other choices have half, double or quadruple the length. For the first three rows, we estimated Korn's constant using a singular value decomposition for $\degree=3$ and $\ell=2$ ($\ell=1$ for $K=112$). For $K=208$, the (global) singular value decomposition was not feasible. Again, the homogeneous Neumann boundary conditions are imposed on the top sides of the 4 patches on the top-right side of the handle. Although the global Korn constant deteriorates with the length of the handle, the iteration counts and condition number estimates show that the solver does not suffer.

Finally, we also aim to illustrate on the discretization error. Namely, in Figure~\ref{fig:Bending}, we present the total bending of the wrench (with $K=64$), i.e., we consider the value of the $y$-component at the lower right corner of the wrench.  Here, the linear system is solved up to an accuracy of $\varepsilon = 10^{-12}$.  In the figures, we present the number of refinement levels on the $x$-axis, the total bending on the $y$-axes. In all configurations the total bending seems to converge for $\ell\to\infty$ (which is to be expected since we have shown a discretization error estimate $\|\mathbf u -\mathbf u_h\|_{H^1(\Omega)}\to0$ for $h\to0$). We observe that the total bending for small $\ell$ and $\degree$ is too small, which means that the material appears as too stiff. This is what one expects in case of geometry locking. In all cases, we observe convergence for $\ell\to \infty$. The convergence is comparably slow for $\degree=1$. Here, the relative error (compared to the total bending as computed for $\ell=5,\degree=6$) is around $5\,\%$ for $\lambda=1$ and $62\,\%$ for $\lambda=100$. For $\degree>1$, already the coarsest grid solution ($\ell=1$) gives rather good approximations for the total bending; the error is below $1\,\%$ for $\lambda=1$ and below $3\,\%$ for $\lambda=100$. This reflects the observations from Section~\ref{sec:approx}.

\begin{table}[b]
\scriptsize
	\newcolumntype{L}[1]{>{\raggedleft\arraybackslash\hspace{-1em}}m{#1}}
	\centering
	\renewcommand{\arraystretch}{1.25}
	\begin{tabular}{lll|L{1em}L{1.8em}|L{1em}L{1.8em}|L{1em}L{1.8em}|L{1em}L{1.8em}|L{1em}L{1.8em}|L{1em}L{1.8em}}
		\toprule
		&&
		& \multicolumn{2}{c|}{$\degree=1$}
		& \multicolumn{2}{c|}{$\degree=2$}
		& \multicolumn{2}{c|}{$\degree=3$}
		& \multicolumn{2}{c|}{$\degree=4$}
		& \multicolumn{2}{c|}{$\degree=5$}
		& \multicolumn{2}{c}{$\degree=6$} \\
		length &K & $\alpha_{\Omega,\Gamma_D}^{-2} $
		& it & $\kappa$
		& it & $\kappa$
		& it & $\kappa$
		& it & $\kappa$
		& it & $\kappa$ \\
		\midrule
        $5$  & $40$ & $ \approx402 $
        & $16$ & $5.4$ & $19$ & $ 7.9$ & $21$ & $ 9.4$ & $22$ & $10.5$ & $22$ & $11.5$ & $23$ & $12.4$\\
        $10$ & $64$  & $\approx1500$
        & $16$ & $5.4$ & $19$ & $ 7.9$ & $21$ & $9.4$ & $22$ & $ 10.6$ & $23$ & $11.5$ & $23$ & $12.4$\\
 		$20$ & $112$ & $\approx5794$
 		& $16$ & $5.4$ & $20$ & $ 7.9$ & $21$ & $ 9.4$ & $22$ & $10.6$ & $23$ & $11.5$ & $23$ & $12.4$ \\
        $40$  & $208$ & $-$
        & $16$ & $5.4$ & $19$ & $ 7.9$ & $21$ & $ 9.4$ & $23$ & $10.6$ & $23$ & $11.6$ & $23$ & $12.5$\\
		\bottomrule
	\end{tabular}
    \captionof{table}{Iteration count (it) and condition numbers ($\kappa$) for the wrench; $\mu=1, \lambda=1, \ell=3$
	  \label{tab:wrenchsize}}
\end{table}

\begin{figure}[htp]
\centering
\scalebox{.7}{
\begin{tikzpicture}
\begin{axis}[
    xlabel={$\ell$},
    xmin=0.5, xmax=5.5,
    ymin=-0.043, ymax=-0.039,
    xtick={1,2,3,4,5},
    ytick={-0.043,-0.042,-0.041,-0.040},
    legend pos=north east,
    ymajorgrids=true,
    grid style=dashed,
]

	\addplot[
    color=blue,
    mark=square,
    ]
    coordinates {
    (1,-0.0400272)
    (2,-0.0413810)
    (3,-0.0418614)
    (4,-0.0420425)
    (5,-0.0421153)
    };

	\addplot[
    color=red,
    mark=o,
    ]
    coordinates {
    (1,-0.0418145)
    (2,-0.0420007)
    (3,-0.0420915)
    (4,-0.0421340)
    (5,-0.0421538)
    };

	\addplot[
    color=green,
    mark=*,
    ]
    coordinates {
    (1,-0.0419759)
    (2,-0.0420730)
    (3,-0.0421250)
    (4,-0.0421495)
    (5,-0.0421610)
    };

	\addplot[
    color=black,
    mark=x,
    ]
    coordinates {
    (1,-0.0421082)
    (2,-0.0421347)
    (3,-0.0421533)
    (4,-0.0421627)
    (5,-0.0421671)
    };

    \legend{$\degree=1$,$\degree=2$,$\degree=3$,$\degree=6$}

\end{axis}
\end{tikzpicture}
}\quad\scalebox{.7}{
\begin{tikzpicture}
\begin{axis}[
    xlabel={$\ell$},
    xmin=0.5, xmax=5.5,
    ymin=-0.03, ymax=-0.01,
    xtick={1,2,3,4,5},
    ytick={-0.01, -0.015, -0.02, -0.025, -0.03},
    legend pos=north east,
    ymajorgrids=true,
    grid style=dashed,
]

	\addplot[
    color=blue,
    mark=square,
    ]
    coordinates {
    (1,-0.0109634)
    (2,-0.0198972)
    (3,-0.0253928)
    (4,-0.0274692)
    (5,-0.0281453)
    };

	\addplot[
    color=red,
    mark=o,
    ]
    coordinates {
    (1,-0.0277644)
    (2,-0.0281289)
    (3,-0.0283260)
    (4,-0.0284215)
    (5,-0.0284665)
    };

	\addplot[
    color=green,
    mark=*,
    ]
    coordinates {
    (1,-0.0281591)
    (2,-0.0283243)
    (3,-0.0284198)
    (4,-0.0284656)
    (5,-0.0284870)
    };

	\addplot[
    color=black,
    mark=x,
    ]
    coordinates {
    (1,-0.0284117)
    (2,-0.0284509)
    (3,-0.0284788)
    (4,-0.0284931)
    (5,-0.0284998)
    };
    \legend{$\degree=1$,$\degree=2$,$\degree=3$,$\degree=6$}

\end{axis}
\end{tikzpicture}
}
    \captionof{figure}{Total bending for the wrench; $\mu=1, \lambda=1$ (left) and
    $\lambda=100$ (right).
	  \label{fig:Bending}}
\end{figure}
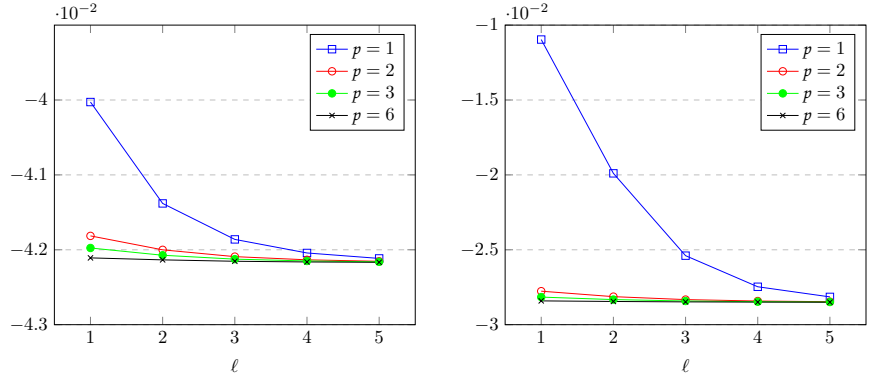

\subsection{Concluding remarks}
In this paper, a IETI-DP solution strategy was proposed and a bound for the condition number of the preconditioned system was proven. This bound is explicit with respect to the grid size, the spline degree, and the local Korn constants. This means that if the patching of the domain is done well, the solution strategy will not suffer from geometry locking. The numerical experiments agree with our theoretical findings. Material locking, which occurs when $\lambda \gg \mu$, was not addressed in this paper. This will be addressed in a forthcoming paper.

\section*{Acknowledgments}
This work was supported by the Austrian Science Fund (FWF): P31048. The first author acknowledges support from the Research Council of Norway, grant 301013. This support is gratefully acknowledged.


\begin{thebibliography}{00}

\bibitem{AcostaDuran:2017}
G.~Acosta and R.~G. Dur{\'a}n.
\newblock {\em Divergence operator and related inequalities}.
\newblock Springer, 2017.

\bibitem{arnold2007mixed}
D.~Arnold, R.~Falk, and R.~Winther.
\newblock Mixed finite element methods for linear elasticity with weakly
  imposed symmetry.
\newblock {\em Mathematics of Computation}, 76(260):1699 -- 1723, 2007.

\bibitem{auricchio2007fully}
F.~Auricchio, L.~Beirão~da Veiga, A.~Buffa, C.~Lovadina, A.~Reali, and
  G.~Sangalli.
\newblock A fully “locking-free” isogeometric approach for plane linear
  elasticity problems: {A} stream function formulation.
\newblock {\em Computer methods in applied mechanics and engineering},
  197(1-4):160 -- 172, 2007.

\bibitem{da2014mathematical}
L.~Beirão~da Veiga, A.~Buffa, G.~Sangalli, and R.~V{\'a}zquez.
\newblock Mathematical analysis of variational isogeometric methods.
\newblock {\em Acta Numerica}, 23:157 -- 287, 2014.

\bibitem{boffi2013mixed}
D.~Boffi, F.~Brezzi, M.~Fortin, et~al.
\newblock {\em Mixed finite element methods and applications}, volume~44.
\newblock Springer, 2013.

\bibitem{BrennerScott}
S.~Brenner and L.~R. Scott.
\newblock {\em {The Mathematical Theory of Finite Element Methods (Third
  edition)}}.
\newblock Texts in Applied Mathematics. Springer, 2008.

\bibitem{Cottrell:Hughes:Bazilevs}
J.~A. Cottrell, T.~J.~R. Hughes, and Y.~Bazilevs.
\newblock {\em Isogeometric Analysis -- Toward Integration of CAD and FEA}.
\newblock John Wiley \& Sons, 2009.

\bibitem{farhat2001feti}
C.~Farhat, M.~Lesoinne, P.~LeTallec, K.~Pierson, and D.~Rixen.
\newblock {FETI-DP: a dual--primal unified FETI method—part I: A faster
  alternative to the two-level FETI method}.
\newblock {\em International journal for numerical methods in engineering},
  50(7):1523 -- 1544, 2001.

\bibitem{farhat2000scalable}
C.~Farhat, M.~Lesoinne, and K.~Pierson.
\newblock A scalable dual-primal domain decomposition method.
\newblock {\em Numerical linear algebra with applications}, 7(7-8):687 -- 714,
  2000.

\bibitem{gould1994introduction}
P.~L. Gould and Y.~Feng.
\newblock {\em Introduction to linear elasticity}.
\newblock Springer, 1994.

\bibitem{hofer2017dual}
C.~Hofer and U.~Langer.
\newblock {Dual-primal isogeometric tearing and interconnecting solvers for
  multipatch dG-IgA equations}.
\newblock {\em Computer Methods in Applied Mechanics and Engineering}, 316:2 --
  21, 2017.

\bibitem{hughes2005isogeometric}
T.~J. Hughes, J.~A. Cottrell, and Y.~Bazilevs.
\newblock {Isogeometric analysis: CAD, finite elements, NURBS, exact geometry
  and mesh refinement}.
\newblock {\em Computer methods in applied mechanics and engineering},
  194(39-41):4135 -- 4195, 2005.

\bibitem{KLAWONN20071400}
A.~Klawonn and O.~Rheinbach.
\newblock Robust {FETI-DP} methods for heterogeneous three dimensional
  elasticity problems.
\newblock {\em Computer Methods in Applied Mechanics and Engineering},
  196(8):1400 -- 1414, 2007.
\newblock Domain Decomposition Methods: recent advances and new challenges in
  engineering.

\bibitem{KlawonnWidlund:2006}
A.~Klawonn and O.~Widlund.
\newblock Dual‐primal {FETI} methods for linear elasticity.
\newblock {\em Communications on Pure and Applied Mathematics}, 59:1523 --
  1572, 11 2006.

\bibitem{kleiss2012ieti}
S.~K. Kleiss, C.~Pechstein, B.~J{\"u}ttler, and S.~Tomar.
\newblock {IETI}--isogeometric tearing and interconnecting.
\newblock {\em Computer Methods in Applied Mechanics and Engineering}, 247:201
  -- 215, 2012.

\bibitem{MandelDohrmannTezaur:2005a}
J.~Mandel, C.~R. Dohrmann, and R.~Tezaur.
\newblock An algebraic theory for primal and dual substructuring methods by
  constraints.
\newblock {\em Appl. Numer. Math.}, 54(2):167 -- 193, 2005.

\bibitem{montardini2022ieti}
M.~Montardini, G.~Sangalli, R.~Schneckenleitner, S.~Takacs, and M.~Tani.
\newblock {A IETI-DP method for discontinuous Galerkin discretizations in
  Isogeometric Analysis with inexact local solvers}.
\newblock Technical report, 2022.
\newblock (Available as arXiv preprint arXiv:2206.08416).

\bibitem{pavarino2016isogeometric}
L.~F. Pavarino and S.~Scacchi.
\newblock {Isogeometric block FETI-DP preconditioners for the Stokes and mixed
  linear elasticity systems}.
\newblock {\em Computer Methods in Applied Mechanics and Engineering}, 310:694
  -- 710, 2016.

\bibitem{SandeManniSpeleers:2019}
E.~Sande, C.~Manni, and H.~Speleers.
\newblock {Sharp error estimates for spline approximation: explicit constants,
  $n$-widths, and eigenfunction convergence}.
\newblock {\em Math. Models Methods Appl. Sci.}, 29(6):1175 --– 1205, 2019.

\bibitem{SchneckenleitnerTakacs:2020}
R.~Schneckenleitner and S.~Takacs.
\newblock Condition number bounds for {IETI-DP} methods that are explicit in
  $h$ and $p$.
\newblock {\em Mathematical Models and Methods in Applied Sciences},
  30(11):2067 -- 2103, 2020.

\bibitem{schneckenleitner2022ieti}
R.~Schneckenleitner and S.~Takacs.
\newblock {IETI-DP methods for discontinuous Galerkin multi-patch Isogeometric
  Analysis with T-junctions}.
\newblock {\em Computer Methods in Applied Mechanics and Engineering},
  393:114694, 2022.

\bibitem{schneckenleitner2021inexact}
R.~Schneckenleitner and S.~Takacs.
\newblock {Inexact IETI-DP for conforming isogeometric multi-patch
  discretizations}.
\newblock In {\em Spectral and High Order Methods for Partial Differential
  Equations ICOSAHOM 2021}, 2023.
\newblock To appear. (Also available as arXiv preprint: arXiv:2110.06087).

\bibitem{SognTakacs:2022}
J.~Sogn and S.~Takacs.
\newblock {Stable discretizations and IETI-DP solvers for the Stokes system in
  multi-patch Isogeometric Analysis}.
\newblock {\em ESAIM: Mathematical Modelling and Numerical Analysis}, 57(2):921
  --– 925, 2022.

\bibitem{stenberg1986construction}
R.~Stenberg.
\newblock On the construction of optimal mixed finite element methods for the
  linear elasticity problem.
\newblock {\em Numerische Mathematik}, 48(4):447 -- 462, 1986.

\bibitem{takacs2018robust}
S.~Takacs.
\newblock {Robust approximation error estimates and multigrid solvers for
  isogeometric multi-patch discretizations}.
\newblock {\em Mathematical Models and Methods in Applied Sciences},
  28(10):1899 -- 1928, 2018.

\bibitem{TakacsTakacs:2016}
S.~Takacs and T.~Takacs.
\newblock {Approximation error estimates and inverse inequalities for B-splines
  of maximum smoothness}.
\newblock {\em Math. Models Methods Appl. Sci.}, 26(7):1411 -- 1445, 2016.

\bibitem{widlund2021block}
O.~Widlund, S.~Zampini, S.~Scacchi, and L.~F. Pavarino.
\newblock {Block FETI--DP/BDDC preconditioners for mixed isogeometric
  discretizations of three-dimensional almost incompressible elasticity}.
\newblock {\em Mathematics of Computation}, 90(330):1773 -- 1797, 2021.

\end{thebibliography}
\end{document}